\documentclass[12pt]{amsart}
\makeatletter
\makeatother
\usepackage{ShaharStyle}

\title{Bound on multiplicities of symmetric pairs over $p$-adic fields}
\author{Shahar Dagan}
\thanks{Email: \texttt{shahar.dagan@weizmann.ac.il}} 
\begin{document}

\begin{abstract}
    We establish uniform bounds on the multiplicities of irreducible admissible representations appearing in spaces of functions on symmetric spaces over $p$-adic fields. These multiplicities can exceed one and depend intricately on the group, the space, and the representation, making exact computations often difficult to carry out. This motivates the search for bounds depending only on structural invariants of the group and the field.

    More precisely, let $\mathbf{G}$ be a connected reductive group over a $p$-adic field $F$ of large residue characteristic (relative to the rank of $\mathbf{G}$), let $\rho$ be a smooth admissible irreducible representation of $G = \mathbf{G}(F)$ and let $\theta$ be a rational involution with fixed-point subgroup $H=G^\theta$. We show that the multiplicity
    \[
        \dim \Hom_{G}\big(\rho, C^\infty(G/H)\big)
    \]
    is uniformly bounded. The bound depends only on the rank of $\mathbf{G}$ and the residue degree of $F$. Our method relies on results of \cite{offen2017parabolic,hakim2008distinguished,fintzen2021types,aizenbud2024bounds} and combines Mackey's theory with cohomological techniques.

    As an application, we deduce a uniform bound on such multiplicities where the base field $F$ varies over all but a bounded number of local completions of a fixed number field.
\end{abstract}

\maketitle
\section{Introduction}
The main results of this paper are Theorems~\ref{intthm: bound in global settings} and \ref{intthm: multiplicity bound unramified} below.

\begin{introtheorem}[see Theorem~\ref{thm: global bound} below]\label{intthm: bound in global settings}
    Let $L$ be a number field and let $\mathbf{G}$ be a connected reductive group over~$L$.
    There exist constants $C_1, C_2$ such that for any rational involution $\theta$ of $\mathbf{G}$, any finite place $v$ of $L$ with residue characteristic $p_v > C_1$, and any smooth admissible irreducible representation $\rho$ of $\mathbf{G}(L_v)$
    \[
        \dim \Hom_{\mathbf{G}(L_v)}\!\left(
        \rho,\; C^\infty\!\left(\mathbf{G}(L_v) / \mathbf{G}(L_v)^{\theta_v}\right)
        \right) < C_2.
    \]
    Here:
    \begin{itemize}
        \item $L_v$ is the completion of $L$ at $v$;
        \item $\theta_v$ is the involution of $\mathbf{G}(L_v)$ induced by $\theta$;
        \item $\mathbf{G}(L_v)^{\theta_v} \subseteq \mathbf{G}(L_v)$ is the subgroup of $\theta_v$-fixed points.
    \end{itemize}
    Moreover, $C_1, C_2$ depend only on $[L : \bQ]$ and $\rank(\mathbf{G})$.
\end{introtheorem}

Theorem~\ref{intthm: bound in global settings} is deduced from Theorem~\ref{intthm: multiplicity bound unramified} using the fact that reductive groups over local fields with sufficiently large residue characteristic are tamely ramified (see Lemma~\ref{lem: bounded bad primes}), together with uniform bounds on the residue degree and the rank at the finite places of a fixed number field.

\begin{introtheorem}[see Theorem \ref{thm: multiplicity bound unramified} below]\label{intthm: multiplicity bound unramified}
    Let $\mathbf{G}$ be a connected reductive group over a $p$-adic field $F$ with residue field $\ff$ of odd residue characteristic $p$, such that $\mathbf{G}$ splits over a tamely ramified extension of $F$ and $p > \rank(\mathbf{G})+1$. Let $\rho$ be a smooth admissible irreducible representation of $G\coloneqq \mathbf{G}(F)$. Let $\theta$ be a rational involution of $G$, $H = G^\theta$ be the subgroup of $\theta$-fixed points, and $\chi$ be a smooth unramified character of $H$.

    There exists a constant $C$ such that
    \[
        \dim \Hom_G\left( \rho,\, C^\infty(G/H,\chi) \right) < C.
    \]
    Moreover, $C$ depends only on the residue degree $[\ff : \bF_p]$ and the absolute rank $\rank(\mathbf{G})$.
\end{introtheorem}

The assumptions on $\mathbf{G}$ and the residue characteristic are needed in order to apply the construction of all irreducible supercuspidal representations by Yu \cite{yu2001construction,kim2007supercuspidal, fintzen2021types}. Specifically, the condition $p > \rank(\mathbf{G})+1$ is sufficient to deduce that $p$ does not divide the order of the absolute Weyl group of $\mathbf{G}(F)$, which is necessary to apply \cite[Theorem 8.1]{fintzen2021types}.

Theorem~\ref{intthm: multiplicity bound unramified} is a substantial partial verification of \cite[Conjecture C]{aizenbud2024bounds} (see Conjecture~\ref{intconj: bounded multiplicity for spherical} below) in the setting of symmetric spaces.
The proof of Theorem~\ref{intthm: multiplicity bound unramified} hinges on known results on parabolic induction obtained in \cite{offen2017parabolic,Bernstein1977}, on distinguished tame supercuspidal representations obtained in \cite{hakim2008distinguished,fintzen2021types}, and representations of compact subgroups of reductive $p$-adic groups obtained in \cite{aizenbud2024bounds}.

The main contribution of our work is to combine these results to obtain a uniform bound on the multiplicities. This bound depends only on simple structural invariants of the group and the field. Previously, only finiteness was known.

The main tool to move from finite multiplicities to bounded multiplicities is Proposition~\ref{prop: bounded fibers}. It plays a central role in bounding the non-vanishing summands in Mackey's multiplicity formula that arise in the proof.

\begin{proposition}\label{prop: bounded fibers}
    Let $\mathbf{G}$ be a connected reductive group over a $p$-adic field $F$ with residue field $\ff$ of odd characteristic $p$, $\theta$ a rational involution of $\mathbf{G}$, $H\subseteq G \coloneqq \mathbf{G}(F)$ the subgroup of $\theta$-fixed points, and $Q$ a subgroup of $G$ which is either a parabolic subgroup or a compact-mod-center subgroup.

    Then there exists a constant $C$ such that the size of each fiber of the map
    \[
        \iota_Q:Q\backslash G/ H \to Q \backslash G /\theta(Q), \quad
        Q g H \mapsto Q g\theta(g)^{-1} \theta(Q)
    \]
    is bounded by $C$.
    Moreover, $C$ depends only on the residue degree $[\ff : \bF_p]$ and $\rank(\mathbf{G})$.
\end{proposition}

The parabolic case follows from Lemma~\ref{lem: (P,H) double cosets bound} and Remark~\ref{rem: proof of prop: bounded fibers}, and the compact-mod-center case follows from Corollary~\ref{cor: (K,H) double cosets bound}.
\subsection{Background and motivation}

Let $G$ be a group and $X$ a transitive $G$-set. A central problem in representation theory is to describe the decomposition of the space of functions on $X$ into irreducible representations of~$G$, namely, which ones occur and with what multiplicities. Typically, one studies this decomposition within specific classes of functions and representations determined by the structure of $G$ and $X$.

As a basic example, if $G$ is a connected reductive group over $\bC$ and $X$ is a \emph{spherical} $G$-variety (i.e., a Borel subgroup of $G$ has an open orbit on $X$), then the ring of regular functions $\bC[X]$ is multiplicity-free as a $G$-representation.

In other settings, multiplicities may exceed one, and in general they can depend in a subtle way on the specific group $G$, the $G$-space $X$, and the representation $\rho$ of $G$. Determining these multiplicities explicitly often requires a detailed classification of $G$, $X$, and $\rho$, which can be highly intricate and, in many cases, incomplete or unknown. In such situations, direct computation is often out of reach. Nevertheless, one may still hope to obtain more qualitative results, such as a \emph{uniform bound} depending only on a small set of simple invariants, rather than on the full classification of the groups, spaces, and representations involved. This perspective motivates the following conjecture.

\begin{introconjecture}[cf. {\cite[Conjecture C]{aizenbud2024bounds}}]
    \label{intconj: bounded multiplicity for spherical}
    Let $\mathbf{G}$ be a reductive group scheme over $\bZ$, and let $\mathbf{X}$ be a $\mathbf{G}$-scheme. Assume that $\mathbf{X}(\bC)$ is a spherical $\mathbf{G}(\bC)$-space. Then there exists an integer $C$ such that for any local field $F$ and any irreducible admissible representation $\rho$ of $\mathbf{G}(F)$, we have
    \[
        \dim \Hom_{\mathbf{G}(F)}\!\big(\rho, C^\infty(\mathbf{X}(F))\big) < C.
    \]
\end{introconjecture}

This conjecture can be approached at several levels of generality:
\begin{itemize}
    \item When both $F$ and $\rho$ are fixed, many cases have been established; see for example~\cite{van1987asymptotic,delorme2010constant,sakellaridis2017periodsharmonicanalysisspherical}.
    \item The next level asks for uniform bounds in $\rho$ with $F$ fixed. Here, nontrivial results exist mostly in the Archimedean case; see~\cite{van1987asymptotic,Kobayashi_2013,krötz2014finiteorbitdecompositionreal,aizenbud2016holonomicityrelativecharactersapplications}.
    \item The most general form, where both $F$ and $\rho$ vary, is far less understood. Known results in this setting are largely limited to cases where the spherical space is multiplicity-free (Gelfand pair) or to closely related situations; see, e.g.,~\cite{GK,Sha,vD86,Fli91,BvD94,Nie,Yak,AGRS, AGS,OS,AG_AMOT,AG_HC,AG_RegPar,AAG,Zha,JSZ2,JSZ1,AS,AGJ,AG_M1J,SZ,Aiz,CS,Car,Rub}.
\end{itemize}

In this paper we prove a uniform bound for the multiplicity
\[
    \dim \Hom_{G}\!\big(\rho, C^\infty(G/H)\big),
\]
where:
\begin{itemize}
    \item $G=\mathbf G(F)$ is the group of rational points of a connected reductive group $\mathbf{G}$ over a $p$-adic field $F$, such that $\mathbf{G}$ splits over a tamely ramified extension of $F$, and the residue characteristic of $F$ is greater than $\rank(\mathbf{G})+1$;
    \item $\theta$ is a rational involution of $\mathbf{G}$, with $H = G^\theta$ the subgroup of $\theta$-fixed points.
\end{itemize}
The bound depends only on the rank of $\mathbf{G}$ and the residue degree of $F$.

We also deduce a global consequence: if $\mathbf{G}$ is a connected reductive group over a number field $L$ and $\theta$ is a rational involution of $\mathbf{G}$, then for any finite place $v$ of $L$ with sufficiently large residue characteristic (in terms of $[L : \bQ]$ and $\rank(\mathbf{G})$) and any irreducible admissible representation $\rho$ of $\mathbf{G}(L_v)$, the multiplicity
\[
    \dim \Hom_{\mathbf{G}(L_v)}\!\big(\rho, C^\infty(\mathbf{G}(L_v)/\mathbf{H}(L_v))\big)
\]
is uniformly bounded in terms of $[L : \bQ]$ and $\rank(\mathbf{G})$.

\subsection{Overview of the method}

Roughly speaking, our approach is to embed an irreducible representation into a representation obtained via three types of induction from compact subgroups. This allows us to apply Mackey's theory, which expresses the multiplicity as a sum of multiplicities in the compact group setting. Bounds in the compact group setting were established in \cite{aizenbud2024bounds}, by further reducing the question to the setting of finite groups of Lie type, and then applying Lusztig's theory of character sheaves.
Hence our main task is to bound the number of non-vanishing summands arising in Mackey's decomposition, which we achieve using cohomological methods.

More precisely, any irreducible representation appears in the parabolic induction of a supercuspidal representation of a Levi subgroup (see, e.g., \cite{bernstein1976representations}). Furthermore, in \cite{yu2001construction} Yu gave a construction of supercuspidal representations as compact induction from compact-mod-center subgroups. Yu's construction is known to be exhaustive in many cases as shown in \cite{kim2007supercuspidal,fintzen2021types}. In addition, as we show in \S\ref{subsec: compact-mod-center to compact}, any representation of a compact-mod-center subgroup embeds in an induction from a subgroup of bounded index, which is a product of a compact subgroup and a central torus. Finally, a representation of such a group is determined by a representation of the compact subgroup together with a compatible central character. The space of smooth functions on the symmetric space may also be realized as an induced representation:
\[
    C^\infty(G/H,\chi) = \Ind_H^G\chi.
\]

All of these inductive constructions allow us to apply Mackey's theory. Roughly speaking, given two subgroups $L, R \subseteq G$, and two representations $\lambda$ and $\rho$ of $L$ and $R$ respectively, Mackey's decomposition formula states that
\[
    \dim \Hom_G(\ind^G_L \lambda, \ind^G_R \rho)
    = \sum_{x \in L \backslash G / R}
    \dim \Hom_{L \cap \operatorname{int}_{x}(R)}(\lambda, \rho \circ \operatorname{int}_{x^{-1}}),
\]
where $\operatorname{int}_{x}(g) = xgx^{-1}$.

This decomposition guides our analysis along two main axes:
\begin{enumerate}
    \item bounding the number of double cosets that contribute non-trivially;
    \item controlling the multiplicities in the resulting twisted cases.
\end{enumerate}

The use of Mackey's theory in the context of symmetric pairs over a $p$-adic field has been previously studied: the parabolic induction is treated in \cite{offen2017parabolic}, while compact induction arising from Yu's construction is analyzed in \cite{hakim2008distinguished}.

Both papers give a similar geometric description of the double cosets that contribute non-trivially to the sum in Mackey's decomposition, and describe the summands as the multiplicity of some other symmetric pairs (see Propositions~\ref{prop: offen geometric lemma for cuspidal} and \ref{prop: cuspidal to compact} for a more formal statement).

The geometric description of the contributing double cosets is given in terms of the fiber of the map appearing in Proposition~\ref{prop: bounded fibers}. This fiber can be identified with the kernel of a map between group cohomology of $C_2 = \langle \theta \rangle$ with coefficients in $\theta$-stable subgroups of $G$ (see Lemma~\ref{lem: fiber to cohomology}).

This identification applies because we work in the setting of symmetric pairs, and follows the framework developed in \cite{delorme2006analogue}. In \S\ref{subsec: bounds for H1(S2,K)}, we bound the size of the cohomology sets in terms of the rank of the group $\mathbf G$ and the residue degree of the field $F$, using general bounds on group cohomology, building on results from \cite{aizenbud2024bounds}.

In summary, we reduce the multiplicity in Theorem~\ref{intthm: multiplicity bound unramified} to a sum of a bounded number of compact symmetric pair multiplicities, each of which is bounded by~\cite[Corollary~B]{aizenbud2024bounds}. Our method for the proof of Theorem~\ref{intthm: multiplicity bound unramified} is reflected in the following diagram:

\begin{center}
    \begin{tikzpicture}[
            box/.style={draw, minimum height=1.3cm, align=center},
            note/.style={align=left},
            arrow/.style={-Latex, thick},
        ]
        \node[box] (irrepG) {$\opr{Irrep}(G)$};
        \node[box, right=3cm of irrepG] (cuspM) {$\opr{Cusp}(M)$ \\ $M\subseteq G \text { Levi}$};
        \node[box, right=3cm of cuspM] (irrepK) {$\opr{Irrep}(K)$ \\ $K\subseteq M \text { compact-mod-center }$};
        \node[box, below=1.5cm of irrepK] (irrepK') {$\opr{Irrep}(K')$ \\ $K'\subseteq K \text { compact}$};
        \node[box, left=3cm of irrepK'] (irrepFp) {$\opr{Irrep}\,\!\bigl(\cG(\mathbb{F}_p)\bigr)$\\   $\cG(\mathbb{F}_p)$ finite group of Lie type};

        \draw[arrow] (irrepG) -- node[midway, above=6mm, align=left]
        {\cite{offen2017parabolic} (\emph{geometric lemma}) \\+ cohomological bounds} (cuspM);

        \draw[arrow] (cuspM) -- node[midway, above=6mm, align=left]
        {\cite{hakim2008distinguished} (Yu's construction) \\+ cohomological bounds} (irrepK);

        \draw[arrow] (irrepK) -- node[midway, right=2mm, align=left]
        {\S\ref{subsec: compact-mod-center to compact} $p$-adic groups\\ structure} (irrepK');

        \draw[arrow] (irrepK') -- node[midway, above=2mm, align=left]
        {\cite{aizenbud2024bounds}} (irrepFp);

        \node[note, below=1mm of irrepFp]
        {\cite{aizenbud2024bounds,She} (Lusztig's character sheaves)};
    \end{tikzpicture}
\end{center}

In the global setting, we fix a number field $L$ and a reductive group $\mathbf G$ over $L$. For all but a bounded number of finite places $v$ of $L$, Yu's construction exhausts all supercuspidal representations of the group $G(L_v)$ (\cite[Theorem 8.1]{fintzen2021types}). Thus we may apply our local result of Theorem~\ref{intthm: multiplicity bound unramified}. Moreover, for such places, both the rank of $\mathbf G_{L_v}$ and the residue degree of $L_v$ are bounded in terms of $[L : \bQ]$ and $\rank(\mathbf G)$ (Lemmas~\ref{lem: residue field degree} and~\ref{lem: rank descent}), yielding Theorem~\ref{intthm: bound in global settings}.
\subsection{Possible extensions}\label{subsec: possible extensions}

While the bounds we obtain are largely uniform, they still depend on the residue degree of the local field $F$. This dependence is inherited from the methods of \cite{aizenbud2024bounds}, on which our argument builds, though it may eventually be overcome.

In our work, this dependence arises primarily from the use of general cohomological bounds for finite groups, as provided in \cite[Proposition~6.0.5]{aizenbud2024bounds}. A more refined analysis could yield stronger bounds, and might even remove the dependence on residue degree altogether. Such analysis may be carried out in one of the following ways:
\begin{enumerate}
    \item sharpen the cohomological estimates in \cite[\S6]{aizenbud2024bounds};
    \item exploit the specific structure of compact subgroups via Bruhat--Tits theory;
    \item  exploit the particular representations relevant to our setting.
\end{enumerate}
See Remarks~\ref{rem: better bounds for coho of compact} and \ref{rem: M-admissible refinement} for a more detailed discussion of possible strategies along these lines.

In the global setting, the assumption in Theorem~\ref{intthm: bound in global settings} on the residue characteristic being large enough could potentially be removed by establishing uniform bounds also at the remaining places. The finiteness of multiplicities for a fixed representation at such places was shown in~\cite{delorme2010constant}, but the method of \cite{delorme2010constant} depends on the depth of the representation, and thus does not appear to yield a uniform bound. For Archimedean places, a uniform bound is available;  see \cite{Kobayashi_2013, krötz2014finiteorbitdecompositionreal, aizenbud2016holonomicityrelativecharactersapplications}.
\subsection{Structure of the paper}
The paper is organized as follows.
\begin{itemize}

    \item In \S\ref{sec: preliminaries and notation}, we fix notation and basic assumptions on the groups and representations involved. Additionally, we introduce some elementary definitions and results on group cohomology and groups over number fields and ramification.

    \item In \S\ref{sec: Mackey's decomposition}, we apply Mackey's theory to parabolic induction, compact induction from compact-mod-center subgroups, and finite induction from compact times central torus subgroups.

    \item In \S\ref{sec: double cosets counts}, we bound the number of relevant double cosets in Mackey's decomposition using group cohomology.

    \item In \S\ref{sec: proof of main results}, we assemble the previous results to prove Theorems~\ref{intthm: bound in global settings} and \ref{intthm: multiplicity bound unramified}.
\end{itemize}
\subsection{Acknowledgments}
I would like to thank my advisor, \textbf{Avraham Aizenbud}, for guiding me in this project. I would also like to thank \textbf{Guy Shtotland} for helpful discussions.

I was partially supported by ISF grant 1781/23 and BSF grant 2022193.

\section{Preliminaries and Notation}\label{sec: preliminaries and notation}

\subsection{\texorpdfstring{$p$}{p}-adic Representations and Symmetric Spaces}\label{subsec: p-adic Representations and Symmetric Spaces}
Unless stated otherwise, we use the following conventions throughout the paper:
\begin{enumerate}
    \item $F$ is a $p$-adic field, that is, a non-Archimedean local field of characteristic $0$. We denote the separable closure by $F^{\opr{sep}}$, the ring of integers by $\cO_F$, the uniformizer by $\varpi$, the residue field by $\ff$, the residue characteristic by $p$ (which is often assumed to be odd), and the residue degree by $f_F = [\ff:\bF_p]$;
    \item\label{item: G} $\mathbf{G}$ is a connected reductive group over $F$, and $G\coloneqq \mathbf{G}(F)$ its set of $F$-points. For a field extension $E/F$ we let $\mathbf G_{E}$ be the base change of $\mathbf  G$ to $E$;
    \item $\mathbf{G}_{\opr{der}}$ is the derived subgroup of $\mathbf{G}$, and $\mathbf{Z}$ is the maximal central torus of $\mathbf{G}$. We also denote by $G_{\opr{der}} \coloneqq \mathbf{G}_{\opr{der}}(F)$ and $Z \coloneqq \mathbf{Z}(F)$ their corresponding groups of points.
    \item $\rank (\mathbf G)$ is the absolute rank of $\mathbf G$, i.e., if $\mathbf T$ is a maximal torus of $\mathbf G$, and $\mathbf T_{F^{\opr{sep}}}\cong \bG_m^{\times n}$ is its split form over $F^{\opr{sep}}$, then $\rank (\mathbf G) = n$;
    \item $\theta\in \Aut(\mathbf G)$ is a rational involution. We denote $\langle \theta \rangle\cong C_2$ the cyclic group of order $2$ generated by $\theta$, which acts on $G$ via $\theta$;
    \item For any elements $x,g\in G$ and any subgroup $Q\subseteq G$
          \begin{itemize}
              \item $\opr{int}_g(x)\coloneqq gxg^{-1}$;
              \item $\iota(g)\coloneqq g\theta(g)^{-1}$, called the symmetrization map.\footnote{The name comes from the involution $\theta(x)=x^{-T}$ on $\GL_n$.} Note that $\theta(\iota(g))=\iota(g)^{-1}$ and that $\iota$ induces a map $\iota_Q: Q\backslash G /H \to Q \backslash G /\theta(Q)$;
              \item $g.\theta(x) = g\theta(g^{-1}xg)g^{-1}$,
                    and $g.\iota(x) = x\cdot g.\theta(x)^{-1}$;
              \item $Q(x) \coloneqq Q\cap x.\theta(Q)$ and $Q^{x.\theta} \coloneqq Q\cap \opr{int}_x(H) = \{q\in Q | x.\theta(q)=q\}$.
          \end{itemize}
          \textbf{Warning:} This notation differs from \cite{offen2017parabolic}, where  $ Q(x) = Q\cap \opr{int}_x\theta(Q)$, which is only defined when $\theta(x)=x^{-1}$ and is sometimes projected to a Levi subgroup, depending on context.
    \item\label{item: H and chi} $H=G^\theta\subseteq G$ is the subgroup of $\theta$-fixed elements, and $\chi$ is a smooth character of $H$;
    \item\label{notation: standard} (cf. \cite[\S2.2]{offen2017parabolic}) We fix a minimal parabolic subgroup $\mathbf{P_0}$ of $\mathbf{G}$ and a $\theta$-stable maximal split torus $\mathbf{S}$ of $\mathbf{G}$ contained in $\mathbf{P_0}$ (see \cite[Lemma 2.3]{helminck1993rationality}). Let $\mathbf{M_0} = \mathbf{Z_G(S)}$ (the centralizer of $\mathbf{S}$ in $\mathbf{G}$ as an algebraic group) and $\mathbf{U_0}$ the unipotent radical of $\mathbf{P_0}$. Then $\mathbf{P_0} = \mathbf{M_0} \ltimes \mathbf{U_0}$ is a $\theta$-stable Levi decomposition.

          We call a parabolic subgroup $\mathbf{P}$ of $\mathbf{G}$ standard if it contains $\mathbf{P_0}$. If $\mathbf{P}$ is a standard parabolic subgroup of $\mathbf{G}$ then
          it contains a unique Levi subgroup $\mathbf{M}$ containing $\mathbf{M_0}$. The group $\mathbf{M}$ is then called a standard Levi subgroup. If $\mathbf{U}$ is the unipotent radical of $\mathbf{P}$ we say that $\mathbf{P} = \mathbf{M} \ltimes \mathbf{U}$ is a
          standard Levi decomposition.

          This terminology carries over to groups of $F$-points. Namely, a subgroup $P\subseteq G$ ($M\subseteq G$) is called standard parabolic (Levi) subgroup if $P = \mathbf{P}(F)$ ($M = \mathbf{M}(F)$) for a standard parabolic (Levi) subgroup  $\mathbf{P}\subseteq \mathbf{G}$ ($\mathbf{M}\subseteq \mathbf{G}$).
          Note that as $S\coloneqq\mathbf{S}(F)$ is Zariski-dense in $\mathbf{S}$, we have $\mathbf{M_0}(F)=Z_{G}(S)$.

          Any Levi decomposition is conjugate to a standard one (possibly by different elements for the various subgroups).
    \item We use standard notation for functors in the category of smooth representations of $p$-adic groups:
          \begin{itemize}
              \item $\Ind_H^G \sigma$ $(\ind_H^G \sigma)$: (compact) induction of $\sigma$, i.e., the space of (compactly supported) locally constant functions $f : G \to V$ satisfying $f(hg) = \sigma(h)f(g)$, with $G$ acting by right translation. We write $C^\infty(G/H,\chi)\coloneqq \Ind_H^G \chi$ for a character $\chi$ of $H$, and $C^\infty(G/H)$ when $\chi$ is trivial;
              \item $\Res_H^G \pi $ or $\pi|_{H}$: restriction of a representation $\pi$ of $G$ to $H$;
              \item $r_{M,G} (\pi) $: normalized Jacquet module of a representation $\pi$ of $G$ with respect to a parabolic subgroup $P \subseteq G$ and its Levi component $M$;
              \item $i_{M,G}(\sigma) $: normalized parabolic induction of a representation $\sigma$ of the Levi component $M$ of a parabolic subgroup $P \subseteq G$.
          \end{itemize}

          All representations are assumed to be smooth, admissible, and complex-valued.
    \item If $\rho,\sigma$ are representations of a group $Q$, set
          \[
              \langle \rho, \sigma \rangle_Q
              \coloneqq \dim \Hom_Q(\rho,\sigma),
          \]
          and refer to it as the multiplicity of $\rho$ in $\sigma$.
    \item\label{item: K notation} $K$ is a compact-mod-center subgroup of $G$ or $M$, depending on context. We always assume that $K$ contains the center of the ambient group.
    \item\label{item: pi notation} $\pi$ is an irreducible representation of $K$.
\end{enumerate}
In our setting, $K$ and $\pi$ arise from Yu's construction (see \cite{hakim2008distinguished} for the required details). In particular, we have that $\ind_K^G\pi$ is irreducible, which explains why we may assume $Z(G)\subseteq K$.

For the purposes of Theorem \ref{intthm: multiplicity bound unramified}, we assume the character $\chi$ of $H$ is unramified, in the sense defined below.
\begin{definition}\label{def: unramified charcter}
    A smooth character $\chi: H \to \bC^\times$ is \emph{unramified} if it is trivial on every compact subgroup of $H$.
\end{definition}
\subsection{Group Cohomology}\label{subsec: group cohomology}
Group cohomology measures the failure of the fixed-point functor to be exact (see Lemma~\ref{lem: exact sequence of coho} below for a more precise statement).
In this paper we follow the exposition of group cohomology as in \cite{serre1979galois}. We briefly recall the definition and some basic results of group cohomology. Later, in \S\ref{subsec: compact-mod-center to compact} and \S\ref{sec: double cosets counts}, we specialize to two specific cases:
\begin{enumerate}
    \item  the acting group is the Galois group of some field, also known as Galois cohomology;
    \item  the acting group is $C_2$, that is, the action comes from an involution.
\end{enumerate}

\begin{definition}[cf. {\cite[\S I5.1]{serre1979galois}}]\label{def: group cohomology}
    Let $A$ be a group equipped with an action of another group $\Gamma$. Define the set of \emph{$1$-cocycles}:
    \[
        Z^1(\Gamma,A) := \big\{
        f: \Gamma \to A
        \,\big|\,
        f(gh) = f(g)\cdot g.f(h)
        \big\}.
    \]
    Two cocycles $f$ and $f'$ are said to be \emph{cohomologous} if there exists $a\in A$ such that $f'(g) = a^{-1} \cdot f(g) \cdot g.a$.
    This is an equivalence relation in $Z^1(\Gamma,A)$, and the quotient set is denoted $H^1(\Gamma,A)$. This is the \emph{first cohomology set of $\Gamma$ in $A$}.
\end{definition}

We do not require $A$ to be abelian. In this general setting, $H^1(\Gamma,A)$ is referred to as \emph{non-abelian group cohomology}. While the standard theory often assumes $A$ is a $\Gamma$-module (and thus abelian), several aspects can be extended to the non-abelian case.

The following two lemmas allow us to bound the group cohomology in terms of simpler cohomology sets.
\begin{lemma}[cf. {\cite[Propositions 36 and 38]{serre1979galois}}]
    \label{lem: exact sequence of coho}
    Let $A$ be a group equipped with an action of another group $\Gamma$. Let $A' \subseteq A$ be a $\Gamma$-stable normal subgroup. Then there is an exact sequence of pointed sets:
    \[
        \label{eq: exact sequence of coho}
        1 \to (A')^\Gamma
        \to A^\Gamma
        \to (A/A')^\Gamma
        \to H^1(\Gamma, A')
        \to H^1(\Gamma, A)
        \to H^1(\Gamma, A/A'),
    \]
    where $(-)^\Gamma$ is the functor of taking $\Gamma$-fixed groups.

    In particular,
    \[
        |H^1(\Gamma, A)| \leq
        |H^1(\Gamma, A')| \cdot
        |H^1(\Gamma, A/A')|.
    \]

    Moreover, if $A'$ is not normal in $A$, the above sequence remains exact up to the final term, that is, omitting $H^1(\Gamma, A/A')$.
\end{lemma}

\begin{lemma}[cf. {\cite[\S I2.6b)]{serre1979galois}}]\label{lem: inf-res exact seq}
    Let $\Gamma$ be a group, $N$ be a closed normal subgroup of $\Gamma$, and $A$ be a $\Gamma$-module. We have the following exact sequence
    \[
        0 \to
        H^1(\Gamma/N,A^N) \to
        H^1(\Gamma,A) \to
        H^1(N,A)^{\Gamma/N}.
    \]
\end{lemma}

\begin{lemma}\label{lem: vanishing of C_p coho}
    Let $p$ be a prime number, and let $C_p$ be the cyclic group of order $p$. For any finite $C_p$-module $A$ of size $n$ such that $p \nmid n$, we have
    \[
        |H^1(C_p,A)| = 1.
    \]
\end{lemma}
\begin{proof}
    Let $\gamma$ be a generator for $C_p$. As $p\nmid n$, we may define the map
    \[
        \phi: A \to A^{C_p},\qquad a\mapsto \frac{1}{p}\sum_{i=0}^{p-1} \gamma^i.a,
    \]
    which is surjective and $C_p$-invariant. By Lemma~\ref{lem: exact sequence of coho} we have
    \[
        |H^1(C_p, A)| \leq
        |H^1(C_p, A^{C_p})| \cdot
        |H^1(C_p, \ker (\phi))|.
    \]

    We have
    \[
        |H^1(C_p, A^{C_p})| = |\Hom(C_p, A^{C_p})| =1,
    \]
    and
    \[
        |H^1(C_p, \ker (\phi))| = |\ker (\phi)/((\gamma-1)\ker (\phi))|=1,
    \]
    where the second equality comes from the fact that $(\gamma-1)|_{\ker (\phi)}$ is injective as the only element in $\ker (\phi)$ fixed by $\gamma$ is $0$.
\end{proof}

\begin{lemma}\label{lem: vanishing of l neq p coho}
    Let $\Gamma$ be a pro-$p$ group and let $A$ be a finite $\Gamma$-module of size $n$. If $p \nmid n$ then
    \[
        |H^1(\Gamma,A)| = 1.
    \]
\end{lemma}
\begin{proof}
    Let $N\subseteq \Gamma$ be the kernel of the action of $\Gamma$ on $A$, and $\Gamma'\coloneqq \Gamma/N$.  As $N$ acts trivially on $A$, $N$ is pro-$p$ and $p\nmid |A|$, we have
    \[
        H^1(N,A) = \Hom(N,A) = 0.
    \]
    By Lemma~\ref{lem: inf-res exact seq} we have
    \[
        H^1(\Gamma,A) \cong H^1(\Gamma/N,A^N) = H^1(\Gamma',A).
    \]

    As $A$ is finite, $N$ is an open pro-$p$ normal subgroup and $\Gamma'$ is a finite $p$-group. Therefore, $\Gamma'$ has a finite composition series with quotients $C_p$. The result follows by iterating Lemma~\ref{lem: inf-res exact seq}, and then invoking Lemma~\ref{lem: vanishing of C_p coho}.
\end{proof}
\subsection{Groups Over Number Fields and Ramification}
We also fix the following conventions throughout the paper:
\begin{itemize}
    \item $L$ is a number field with ring of integers $\cO_L$;
    \item $v$ is a place of $L$, $L_v$ is the completion of $L$ at $v$, $\fl_v$ is the residue field of $L_v$,  $p_v$ is the characteristic of $\fl_v$, and $f_v \coloneqq [\fl_v:\bF_{p_v}]$ is the residue degree;
    \item $\mathbf{G}$ is a connected reductive group over $L$ equipped with a rational involution $\theta$;
    \item $\theta_v$ is the involution of $\mathbf G(L_v)$ induced by $\theta$;
    \item  $\mathbf{G}(L_v)^{\theta_v} \subseteq \mathbf{G}(L_v)$ is the subgroup of $\theta$-fixed points.
\end{itemize}

We recall some basic notions from number theory used throughout the paper.

\begin{definition}[cf. {\cite[1, \S4]{serre2013local}}]
    \label{def: ramification}
    Let $F'/F$ be a finite extension of $p$-adic fields of residue characteristic $p$. Let $\cO_{F'}$ and $\cO_F$ be their respective rings of integers, and let $\fp_{F'} \subset \cO_{F'}$ and $\fp_F \subset \cO_F$ be the corresponding maximal ideals.

    The \emph{ramification index} $e$ of the extension $F'/F$ is the unique integer such that
    \[
        \fp_F \cO_{F'} = \fp_{F'}^e.
    \]
    The extension $F'/F$ is called \emph{tamely ramified} if $p \nmid e$, and \emph{wildly ramified} otherwise.
\end{definition}

\begin{definition}
    A connected reductive group $\mathbf G$ over a $p$-adic field $F$ is \emph{tamely ramified} if there exists a tamely ramified extension $F'/F$ such that $\mathbf G_{F'}$ is split; otherwise it is called \emph{wildly ramified}. We use the same terminology for the group of rational points $G\coloneqq \mathbf G(F)$.
\end{definition}

The main reason we work with tamely ramified reductive groups is to invoke \cite[Theorem 8.1]{fintzen2021types}, which states that all supercuspidal representations of such a group are obtained by Yu's construction \cite{yu2001construction}, given that the residue characteristic of $F$ does not divide the Weyl group of $G$.
In this paper we assume that the residue characteristic is greater than the rank of the group plus one, which is a slightly stronger assumption on the residue characteristic.

Consequently, we are able to use the results of \cite{hakim2008distinguished} on these representations in the setting of symmetric spaces.

We prove some elementary results on tamely ramified reductive groups.

\begin{lemma}\label{lem: levi of tamely ramified is tamely ramified}
    Let $\mathbf G$ be a tamely ramified connected reductive group, and let $\mathbf M\subseteq \mathbf G$ be a standard Levi subgroup. Then $\mathbf M$ is also tamely ramified.
\end{lemma}
\begin{proof}
    We have the standard Levi decomposition $\mathbf P=\mathbf M\ltimes \mathbf U$. Let $F'/F$ be a tamely ramified extension such that $\mathbf G_{F'}$ is split. $\mathbf P_{F'}$ is a parabolic subgroup of $\mathbf G_{F'}$ and hence contains a maximal split torus. There exists a Levi subgroup of $\mathbf P_{F'}$ containing this torus \cite[Corollary 8.4.4]{springer1998linear}, and as all Levi subgroups of a fixed parabolic are conjugate, $\mathbf M_{F'}$ also contains a maximal split torus.
\end{proof}

\begin{lemma}
    \label{lem: bounded bad primes}
    Fix positive integers $d,r$. There exists a constant $C = C(d,r)$ such that for every
    \begin{itemize}
        \item number field $L$ with $[L : \bQ] \leq d$;
        \item connected reductive group $\mathbf G$ over $L$ with $\rank(\mathbf G) \leq r$;
    \end{itemize}
    the number of finite places $v$ of $L$ for which $\mathbf G_{L_v}$ is wildly ramified is at most $C$.
\end{lemma}

\begin{proof}
    By \cite[Lemma B.1.1]{aizenbud2024bounds}, there exists a function $C^{\opr{spt}} : \bN \to \bN$ such that any reductive group over a field $F$ splits over an extension $F'/F$ of degree at most $C^{\opr{spt}}(\rank(\mathbf G))$. Hence, it suffices to show that for all but a bounded number of finite places $v$, every field extension $L'_v/L_v$ of degree at most $C^{\opr{spt}}(\rank(\mathbf G))$ is tamely ramified.

    Let $v$ be a place of $L$, and let $L'_v/L_v$ be a field extension of degree at most $C^{\opr{spt}}(\rank(\mathbf G))$. Let $e$ denote the ramification index of $L'_v/L_v$. By \cite[II, \S2, Cor.~1]{serre2013local}, $e$ divides $[L'_v : L_v] \leq C^{\opr{spt}}(\rank(\mathbf G))$. If the residue characteristic $p_v$ of $L_v$ satisfies $p_v > C^{\opr{spt}}(\rank(\mathbf G))$, then $p_v \nmid e$, and thus $L'_v/L_v$ is tamely ramified.

    It remains to bound the number of places $v$ for which $p_v \leq C^{\opr{spt}}(\rank(\mathbf G))$. For a fixed prime $p$, each such place corresponds to a prime ideal $\fp$ of $\cO_L$ lying over $(p) \subset \bZ$. By \cite[I, \S4, Prop.~10]{serre2013local}, the sum of the ramification indices over $(p)$ is at most $[L : \bQ]$. Hence, the number of such places is at most $[L : \bQ] \cdot C^{\opr{spt}}(\rank(\mathbf G))$, and the total number of places with $p_v \leq C^{\opr{spt}}(\rank(\mathbf G))$ is bounded as claimed.
\end{proof}

We next verify that the rank and the size of the residue field can be controlled globally in terms of the number field $L$.

\begin{lemma}
    \label{lem: residue field degree}
    Let $L$ be a number field, and let $v$ be a finite place of $L$ with residue degree $f_v$. Then
    \[
        f_v \leq [L : \bQ].
    \]
\end{lemma}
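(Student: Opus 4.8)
The plan is to use the basic identity from algebraic number theory relating the degree of the extension $L/\bQ$ to the ramification indices and residue degrees of the primes lying above a fixed rational prime. Fix a finite place $v$ of $L$, lying above the rational prime $p = p_v$. The residue field $\fl_v$ of $L_v$ is a finite extension of $\bF_p$ of degree $f_v = [\fl_v : \bF_p]$, which coincides with the residue degree of the prime ideal $\fp_v \subset \cO_L$ over $(p) \subset \bZ$.

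The key step is the fundamental identity $\sum_{\fp \mid (p)} e_\fp f_\fp = [L : \bQ]$, where the sum runs over all prime ideals $\fp$ of $\cO_L$ dividing $(p)$, and $e_\fp, f_\fp$ denote the ramification index and residue degree of $\fp$ over $(p)$ respectively; this is precisely \cite[I, \S4, Prop.~10]{serre2013local}. Since every term $e_\fp f_\fp$ is a positive integer, each individual summand is bounded by the total: in particular $e_v f_v \le [L : \bQ]$, and since $e_v \ge 1$ we conclude $f_v \le [L : \bQ]$.

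There is essentially no obstacle here; the only point requiring a word is the identification of the residue degree of the completion $L_v$ with the residue degree of the prime $\fp_v$ of $\cO_L$, which follows because completion does not change the residue field (the natural map $\cO_L/\fp_v \to \cO_{L_v}/\fp_{L_v}$ is an isomorphism). With that in hand the inequality is immediate from the fundamental identity.
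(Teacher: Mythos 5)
Your proof is correct and is essentially the same argument as the paper's, merely phrased in the global language of prime ideal factorization. You invoke the fundamental identity $\sum_{\fp \mid (p)} e_\fp f_\fp = [L:\bQ]$ directly, whereas the paper arrives at the same bound by passing through local completions: it first cites $e_v f_v = [L_v : \bQ_p]$ and then the decomposition $L \otimes_\bQ \bQ_p \cong \prod_{u\mid p} L_u$ with $\sum_{u\mid p}[L_u:\bQ_p] = [L:\bQ]$. The two formulations of the fundamental identity are equivalent, and the chain of inequalities $f_v \le e_v f_v \le [L:\bQ]$ is identical in both; your additional remark identifying the residue field of $\cO_L/\fp_v$ with that of $\cO_{L_v}$ is a correct and worthwhile point of hygiene that the paper leaves implicit.
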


\begin{proof}
    By \cite[II, \S2, Corollary 1]{serre2013local}, the residue degree $f_v$ and the ramification index $e_v$ satisfy
    \[
        f_v \cdot e_v = [L_v : \bQ_p].
    \]
    Let $p$ be a prime number such that $v \mid p$. By \cite[II, \S1, Corollary 3]{lang1994algebraic}, we have
    \[
        L \otimes_{\bQ} \bQ_p \cong \prod_{u \mid p} L_u, \quad \text{with} \quad \sum_{u \mid p} [L_u : \bQ_p] = [L : \bQ].
    \]
    In particular, $[L_v : \bQ_p]$ is less than $[L : \bQ]$, and therefore
    \[
        f_v \leq [L_v : \bQ_p] \leq [L : \bQ].
    \]
\end{proof}

\begin{lemma}
    \label{lem: rank descent}
    Let $\mathbf G$ be a reductive group over a number field $L$, and let $v$ be a finite place of $L$. Then
    \[
        \rank(\mathbf G) = \rank(\mathbf G_{L_v}).
    \]
\end{lemma}

\begin{proof}
    Since both $\mathbf G_{L^{\opr{sep}}}$ and $\mathbf G_{L_v^{\opr{sep}}}$ are split, and because
    \[
        \mathbf G_{L^{\opr{sep}}} \times_{L^{\opr{sep}}} L_v^{\opr{sep}} \cong \mathbf G_{L_v^{\opr{sep}}},
    \]
    it follows that the ranks of $\mathbf G$ over $L$ and over $L_v$ agree.
\end{proof}

\section{Mackey's Decompositions}\label{sec: Mackey's decomposition}
The proof of Theorem~\ref{intthm: multiplicity bound unramified} involves three types of induction --- parabolic induction from a supercuspidal representation, compact induction from a compact-mod-center subgroup arising from Yu's construction (see \S\ref{subsec: induct from compact}), and finite induction from subgroups of the form compact subgroup times a maximal central torus.

In the setting of symmetric pairs over $p$-adic groups, the parabolic induction was studied by Offen in \cite{offen2017parabolic} and the compact induction from a compact-mod-center subgroup was studied by Hakim and Murnaghan in \cite{hakim2008distinguished}. Both used Mackey's theory, and achieved a geometric description of the non-vanishing summand in Mackey's decomposition formula. In \S\ref{subsec: parabolic induction} and \S\ref{subsec: induct from compact}, we briefly recall their results.

In \S\ref{subsec: compact-mod-center to compact} we also apply Mackey's decomposition to bound the multiplicities arising in the compact-mod-center setting, by reducing to the compact setting.
\subsection{Parabolic induction}\label{subsec: parabolic induction}
Let $\mathbf G$ be a connected reductive group over a $p$-adic field $F$, equipped with a rational involution $\theta$. Let $G=\mathbf G(F)$ be the group of $F$-points, $H\coloneqq G^\theta$  be the subgroup of $\theta$-fixed elements, and $\chi$ be a smooth character of $H$. Let $P \subseteq G$ be a standard parabolic subgroup with standard Levi decomposition $P = M U$, and let $\sigma$ be a representation of $M$.

In \cite{offen2017parabolic}, Offen studies when the parabolically induced representation $i_{M,G}(\sigma)$ is $(H, \chi)$-distinguished, that is,
\[
    \langle i_{M,G}(\sigma), C^\infty(G/H, \chi) \rangle_G \neq 0.
\]
In particular, in \S4–5 of \cite{offen2017parabolic} Offen proves the following formula, under the additional assumption that $\sigma$ is supercuspidal.

\begin{proposition}[cf.~{\cite[Prop.~4.1 and Cor.~5.2]{offen2017parabolic}}]
    \label{prop: offen geometric lemma for cuspidal}
    Let $G,\theta,H,P,M,\sigma,\chi$ be as above, and assume that $\sigma$ is supercuspidal. Then there exist:
    \begin{itemize}
        \item a finite set $\cW$ of representatives for the $(P,H)$-double cosets satisfying
              \[
                  \eta.\theta(M)=M;
              \]
        \item for each $\eta \in \cW$, an unramified smooth character\footnote{This character arises from the normalization of parabolic induction.} $\Delta_\eta$ of
              \[
                  M^{\eta.\theta} \coloneqq \{m \in M \mid \eta.\theta(m)=m\};
              \]
    \end{itemize}
    such that
    \[
        \langle i_{M,G}(\sigma), C^\infty(G/H, \chi) \rangle_G
        =
        \sum_{\eta \in \cW}
        \left\langle
        \sigma,\,
        C^\infty\!\left(
        M / M^{\eta.\theta},\,
        \Delta_\eta \cdot \chi \circ \operatorname{int}_{\eta^{-1}}
        \right)
        \right\rangle_M .
    \]
\end{proposition}

In \cite[\S5]{offen2017parabolic}, Offen refers to the elements of $\cW$ as \emph{$M$-admissible}, and further studies their properties.
\subsection{Induction from compact-mod-center subgroups}\label{subsec: induct from compact}
In this work, we consider only supercuspidal representations arising via a construction by Yu \cite{yu2001construction}, which was shown to be exhaustive in many cases \cite{kim2007supercuspidal,fintzen2021types}. We refer to them as tame supercuspidal representations.
In particular, Yu's construction is a method to build an irreducible supercuspidal representation as the compact induction from a compact-mod-center subgroup associated to a ``Yu-datum'' (see \cite[\S3]{hakim2008distinguished} for more details).

In \cite{hakim2008distinguished}, Hakim and Murnaghan studied $H$-distinguished tame supercuspidal representations, for a symmetric subgroup $H$. They applied Mackey's theory to analyze the multiplicity of a tame supercuspidal representation, as a sum of multiplicities of representations of compact-mod-center subgroups. They studied which summands are non-zero, a property they termed \emph{strong compatibility} (see \cite[Definition 5.6]{hakim2008distinguished}).
They proved that strong compatibility implies a weaker condition, called \emph{moderate compatibility} (see \cite[Proposition 5.10]{hakim2008distinguished}), which has a simpler geometric description. We summarize their results in the following lemma.

\begin{proposition}\label{prop: cuspidal to compact}
    Let $\mathbf G$ be a tamely ramified connected reductive group over a $p$-adic field $F$ with residue characteristic greater than $\rank(\mathbf G)+1$. Let $\theta$ be a rational involution of $G= \mathbf G(F)$, $H$ the subgroup of $\theta$-fixed elements of $G$, and $\chi$ an unramified smooth character of $H$. Let $\rho$ be an irreducible $(H,\chi)$-distinguished supercuspidal representation of $G$.

    Then there exist
    \begin{itemize}
        \item an element $g_0\in G$;
        \item a $g_0.\theta$-stable compact-mod-center subgroup $K\subseteq G$;
        \item an irreducible $K$-representation $\pi$;
        \item a finite set $\cZ$ of $(K,H)$-double coset representatives
    \end{itemize}

    such that
    \begin{itemize}
        \item $\rho \cong \ind_K^G \pi$;
        \item for any element $\zeta \in \cZ$
              \[
                  \zeta g_0.\theta(\zeta )^{-1}\in K;
              \]
        \item the multiplicity of $\rho$ is equal to the following sum
              \[
                  \langle \rho, C^\infty(G/H,\chi) \rangle_G
                  =
                  \sum_{\zeta  \in \cZ}
                  \left\langle
                  \pi,\,
                  C^\infty\!\left(
                  K / K^{\zeta g_0.\theta}
                  \right)
                  \right\rangle_K .
              \]
    \end{itemize}

\end{proposition}
\begin{proof}
    By \cite[Theorem 8.1]{fintzen2021types} there exists a Yu datum $\Psi$ such that $\rho = \ind_K^G \pi$ for $K=K(\Psi)$ and $\pi = \kappa(\Psi)$ (see \cite[\S3.1 and \S3.4]{hakim2008distinguished}). By Mackey's decomposition formula we obtain
    \begin{align*}
        \langle \rho, C^\infty(G/H,\chi) \rangle_G
         & =
        \sum_{z  \in K\backslash G/H}
        \left\langle
        \pi,\,
        \chi\circ\opr{int_{z^{-1}}}
        \right\rangle_{K\cap\opr{int}_z(H)} \\
         & =
        \sum_{z  \in K\backslash G/H}
        \left\langle
        \pi,\,
        1
        \right\rangle_{K^{z.\theta}}        \\
         & =
        \sum_{z  \in K\backslash G/H}
        \left\langle
        \pi,\,
        C^\infty\!\left(
        K / K^{z.\theta}
        \right)
        \right\rangle_{K},
    \end{align*}
    where we used the fact that $\chi\circ\opr{int_{z^{-1}}}|_{K\cap\opr{int}_z(H)}=1$ as $\chi$ is unramified.
    As $\rho$ is $H$-distinguished there exists a $(K,H)$-double coset representative $g_0$ such that $\left\langle \pi,1\right\rangle_{K^{g_0.\theta}}\neq 0$.

    In \cite[\S5.1]{hakim2008distinguished} the authors define an equivalence relation on Yu data, called \emph{K-equivalence}, which has the following property
    \[
        \Psi \text{ is $K$-equivalent to } \dot{\Psi} \implies
        K \coloneqq K(\Psi) = K(\dot{\Psi}) \text{ and }
        \pi \coloneqq \kappa(\Psi) \cong \kappa(\dot{\Psi}).
    \]
    Denote the $K$-equivalence class of $\Psi$ by $\xi$.

    Denote the $K$-orbit of $g_0.\theta$ in the set of involutions on $G$ by $\Theta'$. By \cite[Definition 5.6]{hakim2008distinguished} $\Theta'$ and $\xi$ are strongly compatible.
    By \cite[Proposition 5.20]{hakim2008distinguished}, $\Theta'$ and $\xi$ are also moderately compatible (\cite[Definition 5.6]{hakim2008distinguished}). By \cite[Proposition 5.7]{hakim2008distinguished}, there is $\dot{\Psi}\in \xi$ which is $g_0.\theta$-symmetric (\cite[Definition 3.13]{hakim2008distinguished}), and by \cite[Proposition 3.14]{hakim2008distinguished}, we obtain $g_0.\theta(K)=K$.

    As $K$ contains the center of $G$, there is a bijection between $(K,H)$-double cosets and $K$-orbits in the $G$-orbit of $\theta$, defined by
    \[
        KzH \leftrightarrow Kz.\theta.
    \]
    By \cite[Proposition 5.10(2)]{hakim2008distinguished}, any other $(K,H)$-double coset whose reciprocal under the above bijection is strongly compatible with $\xi$, has a representative $\zeta$ such that $\zeta g_0.\theta(\zeta )^{-1}\in K^0\subseteq K$, where $K^0$ is defined in \cite[\S3.1]{hakim2008distinguished}.
\end{proof}

Using this result, Hakim and Murnaghan proved that the number of non-vanishing summands in Mackey's decomposition is finite. In the next section, we show this number is in fact bounded by a constant depending only on the absolute rank of $\mathbf G$ and the residue degree of $F$.
\subsection{From compact-mod-center to compact}\label{subsec: compact-mod-center to compact}
Propositions~\ref{prop: offen geometric lemma for cuspidal} and \ref{prop: cuspidal to compact} allow us to reduce to the setting of compact-mod-center subgroups. In this subsection, we explain how to further reduce to the compact subgroup setting.

We first construct a compact subgroup which, after multiplying by a maximal central torus, yields a subgroup of bounded index.
\begin{lemma}\label{lem: compact times center fin ind in compact-mod-center}
    Fix a positive integer $r$. There exists a constant $C = C(r)$ such that for every
    \begin{itemize}
        \item $p$-adic field $F$ with residue characteristic greater than $r+1$;
        \item connected reductive group $\mathbf{G}$ over $F$ with $\rank(\mathbf{G}) \leq r$;
        \item compact-mod-center subgroup $K \subseteq G \coloneqq \mathbf{G}(F)$ containing the center $Z(G)$;
    \end{itemize}
    the quotient $K / K'Z$ is a finite abelian group of order at most $C$. Here
    \[
        K' \coloneqq K \cap G_{\opr{der}}, \qquad G_{\opr{der}} \coloneqq \mathbf{G}_{\opr{der}}(F), \qquad Z \coloneqq \mathbf{Z}(F),
    \]
    where $\mathbf{G}_{\opr{der}}$ is the derived subgroup of $\mathbf{G}$ and $\mathbf{Z}$ is its maximal central torus.
\end{lemma}
The proof requires the following lemmas.

\begin{lemma}\label{lem: bound galois coho on finite modules}
    Let $F$ be a $p$-adic field with residue characteristic $p$, and let $\Gal(F)=\Gal(F^{\opr{sep}}/F)$ be the absolute Galois group of $F$. Let $A$ be a finite $\Gal(F)$-module of size $n$. If $p \nmid n$ then
    \[
        |H^1(\Gal(F),A)| \leq n^2.
    \]
\end{lemma}
\begin{proof}
    Let $F^{\opr{tame}}\subseteq F^{\opr{sep}}$ be the maximal tamely ramified extension of $F$, and let $P_F \coloneqq \Gal(F^{\opr{tame}})\subseteq \Gal(F)$ be the wild inertia group, which is a pro-$p$ group. By Lemma~\ref{lem: inf-res exact seq} and Lemma~\ref{lem: vanishing of l neq p coho} it is enough to show that
    \[
        |H^1(\Gal(F^{\opr{tame}}/F),B)| \leq m^2,
    \]
    for any $\Gal(F^{\opr{tame}}/F)$-module $B$ of size $m$ such that $p\nmid m$. This follows as $\Gal(F^{\opr{tame}}/F)$ is topologically generated by two elements, and each cocycle is determined by its values on these generators.
\end{proof}

\begin{lemma}\label{lem: size of central isogeny}
    Let $\mathbf{G}$ be a connected reductive group over a $p$-adic field $F$ with residue characteristic $p$.
    The map
    \[
        \mathbf m:\mathbf{Z} \times \mathbf{\mathbf{G}_{\opr{der}}} \to \mathbf{G}
    \]
    is a central isogeny.
    Moreover $\ker(\mathbf{m})$ is a finite (diagonalizable) algebraic group (in the sense of \cite[\S9]{Milne_reductive_groups}), and its order divides the order of $W_{\mathbf{G}}$, the absolute Weyl group of $\mathbf{G}$.
\end{lemma}

\begin{proof}
    By \cite[Theorem 3.2.2]{conrad2020reductive}, $\mathbf m$ is a central isogeny. For the second claim, we have a canonical identification
    \[
        \ker(\mathbf{m}) \cong \mathbf{Z}\cap\mathbf{\mathbf{G}_{\opr{der}}} \subseteq \mathbf{Z(\mathbf{G}_{\opr{der}})}.
    \]
    $\mathbf{\mathbf{G}_{\opr{der}}}$ is semi-simple and $W_{\mathbf{G}_{\opr{der}}} = W_{\mathbf{G}}$. Therefore it suffices to show that for any connected semi-simple group $\mathbf{G'}$ the order of $\mathbf{Z(G')}$ divides the order of $W_{\mathbf{G'}}$.

    Let $\mathbf{\widetilde{G'}}$ be the universal covering of $\mathbf{G'}$. As $\mathbf{Z(\widetilde{G'})}\twoheadrightarrow\mathbf{Z(G')}$ and $W_{\mathbf{G}'} = W_{\mathbf{\widetilde{G'}}}$, it is enough to show that $\mathbf{Z(\widetilde{G'})}$ divides the order of $W_{\mathbf{\widetilde{G'}}}$.

    Furthermore, we may assume $\widetilde{\mathbf{G}'}$ is split, as base changing to the separable closure preserves the order of $\ker(\mathbf{m})$ and $W_{\mathbf{\widetilde{G'}}}$. By \cite[Proposition 8.1.8]{springer1998linear},
    \[
        X^*(\mathbf{Z(\widetilde{G'})}) \cong X^*(\mathbf T)/\bZ\Phi
    \]
    where $X^*(-)\coloneqq \Hom(-,\bG_m)$ denotes the set of characters, $\mathbf T$ is a split maximal torus of $\mathbf{\widetilde{G'}}$, and $\Phi$ is the set of simple roots associated to $\mathbf{\widetilde{G'}}$. As $\mathbf{\widetilde{G'}}$ is simply connected, $X^*(\mathbf T)\cong P$, where $P$ is the weights lattice.

    There is a bijection between split simply connected semi-simple groups over a separably closed field and simply connected root data, and thus for root systems. Furthermore, it is enough to check the claim for the simple constituents of the root system. A case-by-case analysis of simple root data gives the required result.
\end{proof}
\begin{proof}[Proof of Lemma~\ref{lem: compact times center fin ind in compact-mod-center}]
    As $[K,K]\subseteq K'$, it is clear that $K/K'Z$ is abelian. Moreover, $K/K'Z$ is isomorphic to the image of $K$ under the map $G\mapsto G/(Z\cdot G_{\opr{der}})$. Therefore it is enough to bound the size of $|G/(Z\cdot G_{\opr{der}})|$.

    By Lemma~\ref{lem: size of central isogeny} and as $F$ is a perfect field, we have a short exact sequence
    \[
        1
        \to
        \ker(\mathbf m)(F^{\opr{sep}})
        \to
        \mathbf{Z}(F^{\opr{sep}}) \times \mathbf{G_{\opr{der}}}(F^{\opr{sep}})
        \to
        \mathbf{G}(F^{\opr{sep}})
        \to
        1,
    \]
    and
    \[
        |\ker(\mathbf m)(F^{\opr{sep}})| \text{ divides } |W_{\mathbf{G}}|.
    \]
    In particular, $|\ker(\mathbf m)(F^{\opr{sep}})|\leq |W_{\mathbf{G}}|$, and as $p>r+1 \geq\rank(\mathbf G)+1$, we also have $p\nmid |\ker(\mathbf m)(F^{\opr{sep}})|$.

    By Lemma~\ref{lem: exact sequence of coho}, we have
    \[
        |G/(Z\cdot G_{\opr{der}})| \leq |H^1(\Gal(F),\ker(\mathbf m)(F^{\opr{sep}}))|.
    \]
    By Lemma~\ref{lem: bound galois coho on finite modules} we have
    \[
        |H^1(\Gal(F),\ker(\mathbf m)(F^{\opr{sep}}))|\leq |\ker(\mathbf m)(F^{\opr{sep}})|^2 \leq |W_{\mathbf{G}}|^2.
    \]
    As the size of the absolute Weyl group is bounded by a constant depending only on $\rank(\mathbf{G})$, this finishes the proof.
\end{proof}

Next we show that $K'$ is compact.
\begin{lemma}\label{lem: K' is compact}
    $K'$ is compact.
\end{lemma}
\begin{proof}
    As $K'$ is a closed subgroup of $K$, its image under the map $p:K\to K/Z(G)$ is closed and hence compact. Moreover, $p(K')\cong K'/(K\cap G_{\opr{der}} \cap Z(G))$ so it is enough to show that $|G_{\opr{der}}\cap Z(G)|$ is finite. As $Z(G)/Z$ is finite, it is enough to show that $|G_{\opr{der}}\cap Z|$ is finite. This follows as $G_{\opr{der}}\cap Z$ embeds in $(\ker(\mathbf m))(F^{\opr{sep}})$ for $\mathbf m$ as in Lemma~\ref{lem: size of central isogeny}. By Lemma~\ref{lem: size of central isogeny}, $(\ker(\mathbf m))(F^{\opr{sep}})$ is finite.
\end{proof}

Let $\pi$ be an irreducible $K$-representation. We show how to bound the multiplicity of $\pi$ by a bounded sum of the multiplicity of an irreducible subrepresentation of $\pi|_{K'}$.

\begin{lemma}\label{lem: compact-mod-center to compact}
    There exists an irreducible $K'$-representation $\tau$ such that
    \[
        \langle \pi, C^\infty(K/K^\theta) \rangle_K
        \leq
        \sum_{\gamma \in K'Z\backslash K/K^\theta}
        \left\langle
        \tau,\,
        C^\infty\!\left(
        K' / (K')^{\gamma.\theta}
        \right)
        \right\rangle_{K'} .
    \]
\end{lemma}

We require the following auxiliary lemmas.
\begin{lemma}\label{lem: compact times center to compact}
    Let $\widetilde{\tau}$ be an irreducible representation of $K' Z$, and let $\theta'$ be an involution on $K'Z$. Then
    \[
        \langle \widetilde{\tau}, C^\infty(K'Z/(K'Z)^{\theta'}) \rangle_{K'Z}
        \leq
        \langle \widetilde{\tau}|_{K'}, C^\infty(K'/K'^{\theta'}) \rangle_{K'}.
    \]
\end{lemma}
\begin{proof}
    Define $\omega \coloneqq \widetilde{\tau}|_{Z}$ and
    \[
        C^\infty(K'Z/(K'Z)^{\theta'})_\omega
        \coloneqq
        \{
        f \in  C^\infty(K'Z/(K'Z)^{\theta'})
        \,|\,
        f(kz) = \omega(z)f(k)
        \}.
    \]
    We have
    \begin{align*}
        \langle \widetilde{\tau}, C^\infty(K'Z/(K'Z)^{\theta'}) \rangle_{K'Z}
         & =
        \langle \widetilde{\tau}, C^\infty(K'Z/(K'Z)^{\theta'})_\omega \rangle_{K'Z} \\
         & \leq
        \langle \widetilde{\tau}, C^\infty(K'Z/(K'Z)^{\theta'})_\omega \rangle_{K'}  \\
         & \leq
        \langle \widetilde{\tau}, C^\infty(K'/K'^{\theta'}) \rangle_{K'},
    \end{align*}
    where the last inequality holds as the map
    \[
        C^\infty(K'Z/(K'Z)^{\theta'})_\omega \to
        C^\infty(K'/K'^{\theta'}), \quad f\mapsto f|_{K'}
    \]
    is $K'$-equivariant and injective.
\end{proof}
\begin{lemma}\label{lem: reduction to compact is irrep}
    For any irreducible representation $\widetilde{\tau}$ of $K' Z$, $\widetilde{\tau}|_{K'}$ is irreducible.
\end{lemma}
\begin{proof}
    The multiplication map $K' \times Z \to K'Z$ is surjective, hence the pullback of $\widetilde{\tau}$ is an irreducible representation of $Z \times K'$.
    By \cite[Proposition 2.16]{bernstein1976representations}, this implies that the restriction $\widetilde{\tau}|_{K'}$ is irreducible.
\end{proof}

\begin{proof}[Proof of Lemma~\ref{lem: compact-mod-center to compact}]
    Note that $K'Z$ is normal in $K$, that the category of representations of $K'Z$ is semi-simple, and by Lemma~\ref{lem: compact times center fin ind in compact-mod-center}, $[K:K'Z]$ is finite. Therefore there is an irreducible sub-representation $\tilde{\tau}\subseteq \pi$ such that $\pi\hookrightarrow \ind_{K'Z}^K \tilde{\tau}$.
    Therefore
    \[
        \langle \pi, C^\infty(K/K^\theta) \rangle_K
        \leq
        \langle \ind_{K'Z}^K \widetilde{\tau}, C^\infty(K/K^\theta) \rangle_K.
    \]
    By Mackey's decomposition formula, we obtain
    \[
        \langle \ind_{K'Z}^K \widetilde{\tau}, C^\infty(K/K^\theta) \rangle_K
        = \sum_{\gamma \in K'Z \backslash K/K^\theta}
        \langle \widetilde{\tau}, 1 \rangle_{K'Z \cap \opr{int}_{\gamma}(K^\theta)},
    \]
    and by Frobenius reciprocity, we have
    \[
        \langle \widetilde{\tau}, 1 \rangle_{K'Z \cap \opr{int}_{\gamma}(K^\theta)}
        =
        \langle \widetilde{\tau}, C^\infty(K'Z/(K'Z)^{\gamma.\theta}) \rangle_{K'Z}
    \]

    As $K'$ and $Z$ are normal and $\theta$-stable, $K'Z$ is $\gamma.\theta$-stable. Therefore we may apply Lemma~\ref{lem: compact times center to compact}, and obtain
    \[
        \langle \widetilde{\tau}, C^\infty(K'Z/(K'Z)^{\gamma.\theta}) \rangle_{K'Z}
        \leq
        \langle \widetilde{\tau}|_{K'}, C^\infty(K'/K'^{\gamma.\theta}) \rangle_{K'}.
    \]
    Setting $\tau \coloneqq \widetilde{\tau}|_{K'}$, which is irreducible by Lemma~\ref{lem: reduction to compact is irrep}, we obtain the claimed inequality.
\end{proof}

\section{Double Coset Counts}\label{sec: double cosets counts}

Another key ingredient in the proof of Theorem~\ref{intthm: multiplicity bound unramified} is a bound on the number of double cosets $\cW$ and $\cZ$ that appear in Propositions~\ref{prop: offen geometric lemma for cuspidal} and \ref{prop: cuspidal to compact} respectively, whose corresponding summands are nonzero.
Note that we have already shown in Lemma~\ref{lem: compact times center fin ind in compact-mod-center} that the double cosets in Lemma~\ref{lem: compact-mod-center to compact} are bounded.

The structure of double coset spaces in the setting of symmetric pairs of reductive groups over $p$-adic fields has been extensively studied; see, for example, \cite{benoist2007polar, delorme2006analogue, helminck1993rationality, springer1988some}.

A particularly useful perspective, developed in \cite{delorme2006analogue}, connects these double coset spaces to the group cohomology set $H^1(C_2, -)$, where $C_2 = \langle \theta \rangle$ is the cyclic group of order two, and the coefficients are certain $\theta$-stable subgroups of $G$.

We begin by establishing a relation between the cohomology sets $H^1(C_2, -)$ and the geometry of double coset spaces. We then bound the size of the relevant cohomology set using results from \cite[§6]{aizenbud2024bounds}. Finally, we apply these bounds to the $(K, H)$ and $(P, H)$ double cosets that appear in our setting.

\subsection{Double cosets to \texorpdfstring{$H^1(C_2,-)$}{H1(S2,-)}}\label{subsec: fibers to coho}
For this section, it is helpful to spell out Definition~\ref{def: group cohomology} of group cohomology in the special case where the action is induced by an involution $\theta$, that is, the acting group is $\langle \theta \rangle\cong C_2$.

\begin{lemma}[cf. Definition~\ref{def: group cohomology}]\label{lem:H1}
    Let $G$ be a group equipped with an involution $\theta$. We may identify the set of $1$-cocycles
    \[
        Z^1(\langle\theta\rangle, G)
        \cong
        \{ x \in G \mid \theta(x) = x^{-1} \},
    \]
    by taking the image of $\theta$.
    The group $G$ acts on the $Z^1(\langle\theta\rangle, G)$ via twisted conjugation:
    \[
        g . x := g x \theta(g)^{-1}.
    \]
    Two cocycles are cohomologous if they are in the same orbit.

    In other words, $H^1(\langle \theta \rangle,G)$, the first cohomology set of $C_2$ in $G$, is canonically identified with the set of orbits of twisted conjugation on the set of $theta$-inverting elements in $G$.
\end{lemma}

Now, recall the symmetrization map $\iota : G \to G$, defined by $\iota(g) = g \theta(g)^{-1}$. For any subgroup $Q \subseteq G$, $\iota$ induces a map on double cosets:
\[
    \iota_Q : Q \backslash G / H \longrightarrow Q \backslash G / \theta(Q).
\]
The following lemma formalizes the key connection between double cosets and group cohomology of $C_2$.
\begin{lemma}
    \label{lem: fiber to cohomology}
    Let $Q \subseteq G$ be a subgroup and let $x \in G$. Then there is a canonical bijection:
    \[
        \iota_Q^{-1}(\iota_Q(QxH)) \leftrightarrow
        \ker\bigg(
        H^1\big(\langle x . \theta \rangle, Q(x)\big)
        \longrightarrow
        H^1\big(\langle x . \theta \rangle, G\big)
        \bigg),
    \]
    where $Q(x) := Q \cap x . \theta(Q)$ and $x.\theta(y) = x\theta(x^{-1}yx)x^{-1}$.
\end{lemma}

The proof of Lemma~\ref{lem: fiber to cohomology} proceeds in two steps. First, in Lemma~\ref{lem: identity fiber to cohomology}, we verify the statement in the special case $x = e$. Then, in Lemma~\ref{lem: fiber to identity fiber}, we reduce the general case to $x=e$.
\begin{lemma}
    \label{lem: identity fiber to cohomology}
    Let $Q \subseteq G$ be a subgroup. Then there is a canonical bijection:
    \[
        \iota_Q^{-1}(\iota_Q(QeH)) \leftrightarrow
        \ker\bigg(
        H^1\big(\langle \theta \rangle, Q(e)\big)
        \longrightarrow
        H^1\big(\langle \theta \rangle, G\big)
        \bigg),
    \]
    where $Q(e) := Q \cap \theta(Q)$.
\end{lemma}
\begin{proof}
    Applying Lemma~\ref{lem: exact sequence of coho} to $Q(e)\subseteq G$
    we obtain the exact sequence:
    \[
        1 \longrightarrow Q(e)^\theta \longrightarrow H \longrightarrow (Q(e) \backslash G)^\theta
        \longrightarrow H^1(\langle \theta \rangle, Q(e))
        \longrightarrow H^1(\langle \theta \rangle, G).
    \]
    This induces a bijection:
    \[
        (Q(e) \backslash G)^\theta / H \leftrightarrow
        \ker\left( H^1(\langle \theta \rangle, Q(e)) \longrightarrow H^1(\langle \theta \rangle, G) \right).
    \]

    Therefore, it is enough to show that $(Q(e) \backslash G)^\theta / H$ is naturally bijective to the fiber $\iota_Q^{-1}(\iota_Q(QeH)) \subseteq Q \backslash G / H$. In the first direction, we show that each $(Q,H)$-double coset in the fiber, contains a unique $(Q(e),H)$-double coset in $(Q(e) \backslash G)^\theta / H$.

    By definition, $QxH \in \iota_Q^{-1}(\iota_Q(QeH))$ if and only if there exist $q_1, q_2 \in Q$ such that:
    \[
        \iota(x) = q_1 \theta(q_2)^{-1} \in Q.
    \]
    Define $r := q_2^{-1} x \in QxH$. Then
    \[
        \iota(r) = q_2^{-1} \iota(x) \theta(q_2) = q_2^{-1} q_1 \in Q.
    \]
    Moreover, since $\theta(\iota(r)) = \iota(r)^{-1}$, we conclude that $\iota(r) \in Q(e)$, or equivalently, $r \in (Q(e) \backslash G)^\theta$.

    We show that $Q(e)rH$ is the unique $(Q(e), H)$-double coset in $QxH$ whose image under $\iota$ intersect $Q(e)$.
    Indeed, if $r' \in QrH = QxH$ also satisfies $\iota(r') \in Q(e)$, then for some $q \in Q$ and $h \in H$,
    \[
        r' = q r h
        \implies
        \iota(r') = q \iota(r) \theta(q)^{-1}
        \implies
        \theta(q) \in Q
        \implies
        q \in Q(e)
        \implies
        r' \in Q(e)rH.
    \]

    For the other direction, note that for any $r \in G$ such that $\iota(r) \in Q(e)$, the $(Q, H)$-double coset $QrH$ lies in $\iota_Q^{-1}(\iota_Q(QeH))$. Hence, the two sets are bijective, completing the proof.
\end{proof}

\begin{lemma}
    \label{lem: fiber to identity fiber}
    For any $x \in G$ and any subgroup $Q \subseteq G$, there is a canonical bijection:
    \[
        \iota_Q^{-1}(\iota_Q(QxH))
        \leftrightarrow
        (x . \iota_Q)^{-1}\big(x . \iota_Q(Qe G^{x.\theta})\big),
    \]
    where $G^{x.\theta}$ is the subgroup of elements fixed by $x.\theta$, and the map $x.\iota$ is defined by
    \[
        x.\iota(g) := g \cdot x.\theta(g^{-1}),
        \quad g\in G.
    \]
\end{lemma}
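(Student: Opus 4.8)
The plan is to realize the claimed bijection explicitly as right translation by $x^{-1}$. Write $R_{x^{-1}}\colon G\to G$ for the map $g\mapsto gx^{-1}$. Since $G^{x.\theta}=\mathrm{int}_x(H)=xHx^{-1}$ — an element $y$ is fixed by $x.\theta=\mathrm{int}_x\circ\theta\circ\mathrm{int}_{x^{-1}}$ if and only if $x^{-1}yx\in H$ — one has $R_{x^{-1}}(QgH)=Q(gx^{-1})(xHx^{-1})$, so $R_{x^{-1}}$ descends to a bijection
\[
    Q\backslash G/H \;\xrightarrow{\ \sim\ }\; Q\backslash G/G^{x.\theta},\qquad QgH\mapsto Qgx^{-1}G^{x.\theta},
\]
with inverse induced by $R_x$. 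It then remains to check that $R_{x^{-1}}$ carries the fiber $\iota_Q^{-1}(\iota_Q(QxH))$ exactly onto $(x.\iota_Q)^{-1}\big(x.\iota_Q(QeG^{x.\theta})\big)$; granting this, restriction yields the desired bijection.

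The computational heart of the argument is the identity
\[
    \iota(gx)\;=\;x.\iota(g)\cdot\iota(x)\qquad(g\in G),
\]
which I would prove directly from the formulas recorded in \S\ref{sec: preliminaries and notation}. On one hand $\iota(gx)=gx\,\theta(x)^{-1}\theta(g)^{-1}=g\,\iota(x)\,\theta(g)^{-1}$; on the other, using $x.\theta=\mathrm{int}_{\iota(x)}\circ\theta$, one gets $x.\iota(g)=g\cdot (x.\theta)(g)^{-1}=g\,\iota(x)\,\theta(g)^{-1}\,\iota(x)^{-1}$, whence $x.\iota(g)\cdot\iota(x)=g\,\iota(x)\,\theta(g)^{-1}=\iota(gx)$. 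Substituting $gx^{-1}$ for $g$, this reads $x.\iota(gx^{-1})=\iota(g)\,\iota(x)^{-1}$.

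Finally I would translate the two membership conditions into one another. By definition $QgH\in\iota_Q^{-1}(\iota_Q(QxH))$ iff $Q\,\iota(g)\,\theta(Q)=Q\,\iota(x)\,\theta(Q)$, while (using $x.\iota(e)=e$) the coset $Qgx^{-1}G^{x.\theta}$ lies in $(x.\iota_Q)^{-1}\big(x.\iota_Q(QeG^{x.\theta})\big)$ iff $Q\cdot x.\iota(gx^{-1})\cdot x.\theta(Q)=Q\cdot x.\theta(Q)$. Since $x.\theta(Q)=\mathrm{int}_{\iota(x)}(\theta(Q))=\iota(x)\,\theta(Q)\,\iota(x)^{-1}$ and $x.\iota(gx^{-1})=\iota(g)\,\iota(x)^{-1}$, the left-hand side of the second condition equals $Q\,\iota(g)\,\theta(Q)\,\iota(x)^{-1}$ and its right-hand side equals $Q\,\iota(x)\,\theta(Q)\,\iota(x)^{-1}$; right-multiplying by $\iota(x)$ shows this condition is equivalent to the first. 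Hence $R_{x^{-1}}$ restricts to a bijection between the two fibers, which is the assertion. I do not expect any genuine obstacle here: the only thing to watch is the careful bookkeeping of left versus right translations and of the conjugation by $\iota(x)$ that relates $\theta$ to $x.\theta$ — and it is precisely this bookkeeping that makes the resulting bijection canonical.
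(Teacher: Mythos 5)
Your proposal is correct and follows essentially the same route as the paper: the paper's proof is exactly the commutative square $x.\iota(gx^{-1})=\iota(g)\,\iota(x)^{-1}$ descended to double cosets via the bijections $\cdot\,x^{-1}$ and $\cdot\,\iota(x)^{-1}$, which is the identity you derive and then unwind explicitly. Your version merely spells out the coset-membership bookkeeping that the paper leaves implicit.
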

\begin{proof}
    Consider the following commutative diagram:
    \[
        \begin{tikzcd}
            G
            \arrow[r, "\iota"]
            \arrow[d, "\cdot x^{-1}"]
            & G \arrow[d, "\cdot \iota(x)^{-1}"] \\
            G
            \arrow[r, "x.\iota"]
            & G
        \end{tikzcd}
    \]
    Since $G^{x.\theta} = xHx^{-1}$, we obtain the following diagram on double cosets:
    \[
        \begin{tikzcd}
            Q \backslash G / H
            \arrow[r, "\iota_Q"]
            \arrow[d, "\cdot x^{-1}"]
            & Q \backslash G / \theta(Q)
            \arrow[d, "\cdot \iota(x)^{-1}"] \\
            Q \backslash G / G^{x.\theta}
            \arrow[r, "x.\iota_Q"]
            & Q \backslash G / x.\theta(Q)
        \end{tikzcd}
    \]

    Since the vertical maps are bijections, the fibers over $\iota_Q(QxH)$ and $\iota_Q(Qe \cdot G^{x.\theta})$ correspond via right multiplication by $x^{-1}$.

    Therefore,
    \[
        \iota_Q^{-1}(\iota_Q(QxH))
        = x.\iota_Q^{-1}(x.\iota_Q(Qe \cdot G^{x.\theta})),
    \]
    as claimed.
\end{proof}

\subsection{Bounds for \texorpdfstring{$H^1(C_2,-)$}{H1(S2,-)}}\label{subsec: bounds for H1(S2,K)}

The main result of this subsection is to establish a uniform bound on the size of the cohomology set $H^1(\langle \theta \rangle, K \cap \theta(K))$, where $K$ is a maximal compact-mod-center subgroup of the points of a connected reductive group $\mathbf{G}$ over a $p$-adic field.

\begin{lemma}
    \label{lem: bounds on coho of compact-mod-center}
    Fix positive integers $d,r$. There exists a constant $C = C(d,r)$ such that for every
    \begin{itemize}
        \item $p$-adic field $F$ with odd residue characteristic and residue degree $f_F \leq d$;
        \item connected reductive group $\mathbf G$ over $F$ with $\rank(\mathbf G) \leq r$;
        \item  compact-mod-center subgroup $K \subseteq G\coloneqq \mathbf G(F)$ containing the center $Z(G)$;
    \end{itemize}
    we have
    \[
        \left| H^1(\langle \theta \rangle, K \cap \theta(K)) \right| < C.
    \]
\end{lemma}
Lemma~\ref{lem: bounds on coho of compact-mod-center}, together with Lemma~\ref{lem: fiber to cohomology}, allows us to bound the number of double cosets contributing nontrivially to the multiplicity formulas of Propositions~\ref{prop: offen geometric lemma for cuspidal} and \ref{prop: cuspidal to compact}.

Our strategy is to apply Lemma~\ref{lem: exact sequence of coho} repeatedly, thereby reducing the cohomology computation to more manageable cases. Ultimately, we arrive at the setting of \cite[Proposition~6.0.5]{aizenbud2024bounds}, which provides an explicit bound on $H^1(\langle \theta \rangle, \Gamma)$ for any finite group $\Gamma$ admitting an embedding into $\GL_n(\bF_p)$; this bound depends only on $n$.

\subsubsection{Bounds for Quotients of Tori}
For the proof of Lemma~\ref{lem: bounds on coho of compact-mod-center} we require the following.

\begin{lemma}\label{lem: bound on coho of quotient of a torus}
    Let $F$ be a $p$-adic field with odd residue characteristic $p$, and let $S$ be the $F$-points of a torus $\mathbf{S}$ of rank $n$, equipped with an involution $\theta$. Let $\Gamma$ be a finite subgroup of $S$.
    Then
    \[
        |H^1(\langle \theta \rangle, S/\Gamma)| \leq |\Gamma|\cdot2^{3n}.
    \]
\end{lemma}

First we prove a few auxiliary lemmas.
\begin{lemma}\label{lem: abelian coho reduct to id and inv}
    Let $A$ be an abelian group equipped with an involution $\theta$. Then
    \[
        |H^1(\langle \theta \rangle,A)|\leq
        |A[2]|\cdot
        |A/A^2|,
    \]
    where $A[2] \coloneqq \{a\in A \,|\, a^2 = 1\}$.
\end{lemma}

\begin{proof}
    Define $\iota: A \to A$ by $\iota(a)=a\theta(a)^{-1}$ for all $a\in A$. Note that $\iota$ is a homomorphism as $A$ is abelian.  Set $A^{-\theta}\coloneqq \Im \iota$, and let $A^\theta\subseteq A$ be the subgroup of $\theta$-fixed points. Consider the $\theta$-stable exact sequence
    \[
        0\to A^\theta \to A \overset{\iota}{\to} A^{-\theta} \to 0.
    \]
    By Lemma~\ref{lem: exact sequence of coho} we have
    \[
        |H^1(\langle \theta \rangle,A)|\leq
        |H^1(\langle \theta \rangle,A^\theta)|
        |H^1(\langle \theta \rangle,A^{-\theta})|.
    \]

    In addition, we have
    \[
        |H^1(\langle \theta \rangle,A^\theta)|=
        |Z^1(\langle \id \rangle,A^\theta)|\leq
        |Z^1(\langle \id \rangle,A)|=
        |H^1(\langle \id \rangle,A)|.
    \]

    Note that
    \[
        Z^1(\langle \theta \rangle,A^{-\theta})
        =Z^1(\langle \opr{inv} \rangle,A^{-\theta})
        \subseteq Z^1(\langle \opr{inv} \rangle,A).
    \]
    Moreover, for any $z_1,z_2 \in Z^1(\langle \theta \rangle,A^{-\theta})$, for which there exists $a\in A^2$ such that $z_1 = az_2$, we have $a\in A^{-\theta}$. Hence
    \[
        |H^1(\langle \theta \rangle,A^{-\theta})|
        \leq |H^1(\langle \opr{inv} \rangle,A)|.
    \]

    Combining the two bounds above we get
    \[
        |H^1(\langle \theta \rangle,A)|\leq
        |H^1(\langle \id \rangle,A)|
        |H^1(\langle \opr{inv} \rangle,A)|.
    \]

    It is straightforward to verify that for any abelian group $A$ there are bijections
    \[
        H^1(\langle \id \rangle,A) \leftrightarrow A[2] ,\quad
        H^1(\langle \opr{inv} \rangle,A) \leftrightarrow A/A^2.
    \]
\end{proof}

For finite abelian groups we have the following corollary.

\begin{corollary}\label{cor: bound coho finite abelian}
    Let $A$ be a finite abelian group equipped with an involution $\theta$.  Then
    \[
        |H^1(\langle \theta \rangle,A)|\leq
        |A[2]|^2.
    \]
\end{corollary}
\begin{proof}
    By Burnside's lemma for the action of $A^2$ on $A$ by right multiplication, we have
    \[
        |A/A^2| = \frac{1}{|A|}\sum_{a\in A} |\opr{Fix}_{A^2}(a)| = |A[2]|.
    \]
\end{proof}
Now we are ready to prove  Lemma~\ref{lem: bound on coho of quotient of a torus}.

\begin{proof}[Proof of Lemma~\ref{lem: bound on coho of quotient of a torus}]
    By Lemma~\ref{lem: abelian coho reduct to id and inv} it is enough to show that
    \[
        |(S/\Gamma)/(S/\Gamma)^2| \leq 2^{2n}, \quad  |(S/\Gamma)[2]| \leq |\Gamma|\cdot2^{n}.
    \]

    We start by bounding $|(S/\Gamma)/(S/\Gamma)^2|$. Note that the quotient map $S\to S/\Gamma$ is surjective and compatible with squaring. Therefore we have a surjection
    \[
        S/S^2 \twoheadrightarrow (S/\Gamma)/(S/\Gamma)^2.
    \]
    As $S$ is of rank $n$, we have that $S$ is the fixed points of some action of $\Gal(F)$ on $\bG_m(F^{\opr{sep}})$. Consider the following exact sequence
    \[
        1
        \to
        (\mu_2(F^{\opr{sep}}))^n
        \to
        (\bG_m(F^{\opr{sep}}))^n
        \overset{(-)^2}{\to}
        (\bG_m(F^{\opr{sep}}))^n
        \to
        1,
    \]
    where $\mu_2$ is the group of roots of unity of order $2$, and  $F^{\opr{sep}}$ is the separable closure of $F$. Taking the fixed points of the Galois action of $\Gal(F)$ we get that
    \[
        |S/S^2| \leq |H^1(\Gal(F),(\mu_2(F^{\opr{sep}}))^n)|,
    \]
    where the action of $\Gal(F)$ on $(\mu_2(F^{\opr{sep}}))^n$ may be non-standard. As $p$ is odd we may apply Lemma~\ref{lem: bound galois coho on finite modules}, and obtain
    \[
        |H^1(\Gal(F),(\mu_2(F^{\opr{sep}}))^n)| \leq 2^{2n}.
    \]

    Now we bound $|(S/\Gamma)[2]|$. By unfolding the definitions we have
    \[
        (S/\Gamma)[2]
        = \{
        s\in S | s^2 \in \Gamma
        \}
        = \bigsqcup_{\gamma\in \Gamma}\{
        s\in S | s^2 = \gamma
        \}
        = \bigsqcup_{\gamma\in \Gamma\cap S^2} \sqrt{\gamma}S[2]
    \]
    where $\sqrt{\gamma}\in S$ is such that $(\sqrt{\gamma})^2=\gamma$. Therefore
    \[
        |(S/\Gamma)[2]| \leq |\Gamma| |S[2]|.
    \]

    Note that $S$ is a subgroup of $(F^{\opr{sep}})^{\times n}$, hence
    \[
        |S[2]| \leq |((F^{\opr{sep}})^{\times n})[2]| = 2^n.
    \]
    Multiplying the bounds proves the claim.
\end{proof}

\subsubsection{Bounds for Compact Subgroups}
We now turn to bound the size of the cohomology set $H^1(\langle \theta \rangle, K')$ for $\theta$-stable compact subgroups $K' \subset G$.
\begin{lemma}
    \label{lem: bounds on coho of compact}
    Fix positive integers $d,r$. There exists a constant $C = C(d,r)$ such that for every
    \begin{itemize}
        \item $p$-adic field $F$, with odd residue characteristic and residue degree $f_F \leq d$;
        \item connected reductive group $\mathbf G$ over $F$ with $\rank(\mathbf G) \leq r$;
        \item  $\theta$-stable compact subgroup $K' \subset G\coloneqq \mathbf G(F)$;
    \end{itemize}
    we have
    \[
        |H^1(\langle \theta \rangle, K')| < C.
    \]
\end{lemma}

To prove Lemma~\ref{lem: bounds on coho of compact}, we require the following two lemmas and definition. The first lemma provides an embedding of a finite quotient of $K'$ into a general linear group over the residue field, allowing us to apply the bound on the cohomology obtained in \cite[Proposition 6.0.5]{aizenbud2024bounds}.
\begin{lemma}
    \label{lem: rd_p of compact subgroups}
    Fix a positive integer $r$. There exists a constant $n = n(r)$ such that for every
    \begin{itemize}
        \item $p$-adic field $F$ with residue field $\ff$;
        \item connected reductive group $\mathbf G$ over $F$ with $\rank(\mathbf G) \leq r$;
        \item  compact subgroup $K' \subset G\coloneqq \mathbf G(F)$ with $\opr{Rad}_p(K') \trianglelefteq K'$ its pro-$p$ radical, that is, the maximal normal pro-$p$ subgroup;
    \end{itemize}
    we have
    \[
        K'/\opr{Rad}_p(K') \hookrightarrow \GL_n(\ff).
    \]
\end{lemma}
\begin{proof}
    Since $\mathbf G$ is a connected reductive group over $F$, there exists a faithful representation
    \[
        G \hookrightarrow \GL_n(F),
    \]
    for some $n$ depending only on $\rank(\mathbf G)$ (see \cite[Lemma 3.2.2]{aizenbud2024bounds}).

    By \cite[Theorem 3.1]{conradcompact}, the image of $K'$ under this embedding lies in a maximal compact subgroup of $\GL_n(F)$, which is conjugate to $\GL_n(\cO_F)$. We may assume, without loss of generality, that $K' \subset \GL_n(\cO_F)$.

    Let $\varpi$ be a uniformizer of $F$. We have the following exact sequence
    \[
        1 \to
        1 + \varpi \Mat_n(\cO_F) \to
        \GL_n(\cO_F) \to
        \GL_n(\ff) \to
        1.
    \]
    $1 + \varpi \Mat_n(\cO_F)$ is the pro-$p$ radical of $\GL_n(\cO_F)$, and its intersection with $K'$ is exactly $\opr{Rad}_p(K')$.

    Therefore, the composition
    \[
        K' \hookrightarrow \GL_n(\cO_F) \twoheadrightarrow \GL_n(\ff)
    \]
    factors through $K'/\opr{Rad}_p(K')$, yielding the desired embedding:
    \[
        K'/\opr{Rad}_p(K') \hookrightarrow \GL_n(\ff).
    \]
\end{proof}

\begin{lemma}
    \label{lem: vanishing of cohomology of pro-p groups}
    Let $p$ be an odd prime, and let $\Pi$ be a pro-$p$ group. Then for any involution $\theta$ on $\Pi$,
    \[
        |  H^1(\langle \theta \rangle, \Pi) | = 1.
    \]
\end{lemma}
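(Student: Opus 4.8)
The plan is to reduce the pro-$p$ statement to the finite $p$-group case established in Lemma~\ref{lem: vanishing of cohomology of p groups}, using the fact that a pro-$p$ group is an inverse limit of finite $p$-groups together with continuity of cohomology. Concretely, I would argue as follows. Since $\Pi$ is a pro-$p$ group it can be written as $\Pi = \varprojlim_i \Pi/N_i$, where the $N_i$ range over open normal $\theta$-stable subgroups of $\Pi$ (one can always arrange $\theta$-stability by replacing an open normal $N$ by $N \cap \theta(N)$, which is again open and normal and now $\theta$-stable); each quotient $\Pi/N_i$ is a finite $p$-group equipped with the induced involution $\bar\theta$.

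The key step is to show that every continuous $1$-cocycle $x \in Z^1(\langle\theta\rangle,\Pi)$ is a coboundary, i.e.\ $x = g\,\theta(g)^{-1}$ for some $g \in \Pi$. Fix such an $x$. For each $i$, its image $\bar x_i \in \Pi/N_i$ lies in $Z^1(\langle\bar\theta\rangle,\Pi/N_i)$, and by Lemma~\ref{lem: vanishing of cohomology of p groups} we have $H^1(\langle\bar\theta\rangle,\Pi/N_i) = \{1\}$, so the set
\[
    S_i \coloneqq \{\, \bar g \in \Pi/N_i \ \mid\ \bar g\,\bar\theta(\bar g)^{-1} = \bar x_i \,\}
\]
is nonempty. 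Moreover each $S_i$ is finite (it is a subset of the finite group $\Pi/N_i$), the $S_i$ form an inverse system under the transition maps of $\varprojlim_i \Pi/N_i$ (the defining equation is preserved by the homomorphisms $\Pi/N_j \to \Pi/N_i$), and these transition maps restrict to maps $S_j \to S_i$. An inverse limit of a nonempty inverse system of finite nonempty sets is nonempty, so there exists $g \in \varprojlim_i \Pi/N_i = \Pi$ whose image in each $\Pi/N_i$ lies in $S_i$; that is, $g\,\theta(g)^{-1}$ and $x$ have the same image in every $\Pi/N_i$, hence $g\,\theta(g)^{-1} = x$. Therefore $Z^1(\langle\theta\rangle,\Pi)$ consists of a single $\Pi$-orbit (the coboundary class), so $|H^1(\langle\theta\rangle,\Pi)| = 1$.

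The main obstacle — really the only subtle point — is the interchange of the cohomology functor with the inverse limit: one must know that every cocycle is continuous (this is part of the definition of continuous cohomology for profinite groups, so it is harmless here) and that solvability modulo each $N_i$ assembles to an honest solution in $\Pi$. The Mittag-Leffler / compactness argument above handles this: it is exactly the standard fact that a filtered inverse limit of nonempty finite sets is nonempty, which is where the finiteness of the $S_i$ is used. No further input beyond Lemma~\ref{lem: exact sequence of coho}, Lemma~\ref{lem: vanishing of cohomology of p groups}, and oddness of $p$ is needed; everything else is the routine bookkeeping of passing to $\theta$-stable open normal subgroups.
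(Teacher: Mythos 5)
Your proof is correct and follows essentially the same route as the paper: reduce to the finite quotients $\Pi/N_i$, apply Lemma~\ref{lem: vanishing of cohomology of p groups} there, and pass back to $\Pi$ via the inverse limit. The only difference is that the paper cites \cite[\S5.1]{serre1979galois} for the identification $H^1(\langle\theta\rangle,\Pi)\cong\varprojlim H^1(\langle\theta\rangle,\Pi/N)$, whereas you prove the needed direction by hand with the standard compactness argument on the nonempty finite solution sets $S_i$; both are fine.
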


\begin{proof}

    By \cite[§5.1]{serre1979galois}, there is a natural identification:
    \[
        H^1(\langle \theta \rangle, \Pi) \cong \varprojlim_{N \trianglelefteq_{\opr{open}} \Pi} H^1(\langle \theta \rangle, \Pi/N).
    \]
    Hence it is enough to show that
    \[
        |H^1(\langle \theta \rangle, \Gamma)| = 1
    \]
    for any finite $p$-group $\Gamma$ equipped with an involution $\theta$. Every finite $p$-group admits a central series whose successive quotients are isomorphic to $C_p$, the cyclic group of size $p$. As the center is stable under any involution, we may apply Lemma~\ref{lem: exact sequence of coho} iteratively, to reduce to the claim that
    \[
        |H^1(\langle \theta' \rangle, C_p)| = 1
    \]
    for any involution $\theta'$ on $C_p$.
    As $p$ is odd, $\langle \theta \rangle$ is isomorphic to the cyclic group of size $2$, the claim follows from Lemma~\ref{lem: vanishing of C_p coho}.
\end{proof}

\begin{definition}[cf. {\cite[Definition 2.2.1]{aizenbud2024bounds}}]
    \label{def: p-reductivity-dimension}
    Let $\Gamma$ be a finite group, and let $p$ be a prime number.

    \begin{itemize}
        \item The \emph{$p$-reductivity dimension} $\opr{rd}_p(\Gamma)$ is the minimal integer $n$ such that there exists a connected reductive algebraic group $\mathbf G$ over $\bF_p$ of dimension $n$ and an embedding
              \[
                  \Gamma \hookrightarrow \mathbf G(\bF_p).
              \]

        \item The \emph{reduced $p$-reductivity dimension} $\opr{rd}_p^{\opr{red}}(\Gamma)$ is defined as
              \[
                  \opr{rd}_p^{\opr{red}}(\Gamma) := \opr{rd}_p\big(\Gamma / \opr{Rad}_p(\Gamma)\big),
              \]
              where $\opr{Rad}_p(\Gamma)$ is the $p$-radical of $\Gamma$, that is, the maximal normal $p$-subgroup of $\Gamma$.
    \end{itemize}
\end{definition}

\begin{proof}[Proof of Lemma~\ref{lem: bounds on coho of compact}]
    Let $K' \subset G$ be a $\theta$-stable compact subgroup. Let $\opr{Rad}_p(K') \trianglelefteq K'$ be its pro-$p$ radical, which is $\theta$-stable. Since $p \neq 2$, Lemma~\ref{lem: vanishing of cohomology of pro-p groups} implies that
    \[
        | H^1(\langle \theta \rangle, \opr{Rad}_p(K')) |= 1.
    \]

    Applying Lemma~\ref{lem: exact sequence of coho} to the short exact sequence
    \[
        1 \to \opr{Rad}_p(K') \to K' \to K'/\opr{Rad}_p(K') \to 1,
    \]
    we obtain that
    \[
        |H^1(\langle \theta \rangle, K')| \leq |H^1(\langle \theta \rangle, K'/\opr{Rad}_p(K'))|.
    \]

    By Lemma~\ref{lem: rd_p of compact subgroups}, we have an embedding
    \[
        K'/\opr{Rad}_p(K') \hookrightarrow \GL_n(\ff),
    \]
    for some $n$ depending only on $\rank(\mathbf G)$. Since $\GL_n(\ff)$ embeds into $\GL_{n \cdot f_F}(\bF_p)$, where $f_F$ is the residue degree of $F$, we obtain
    \[
        \opr{rd}_p^{\opr{red}}(K'/\opr{Rad}_p(K')) \leq C,
    \]
    for some constant $C$ depending only on $\rank(\mathbf G)$ and $f_F$.

    By \cite[Proposition 6.0.5]{aizenbud2024bounds}, the size of $H^1(\langle \theta \rangle, K'/\opr{Rad}_p(K'))$ is bounded in terms of $\opr{rd}_p^{\opr{red}}(K'/\opr{Rad}_p(K'))$. Hence, the cohomology set $H^1(\langle \theta \rangle, K')$ is bounded by a constant depending only on $\rank(\mathbf G)$ and $f_F$.
\end{proof}

\subsubsection{Proof of Lemma~\ref{lem: bounds on coho of compact-mod-center}}
We are now ready to prove Lemma~\ref{lem: bounds on coho of compact-mod-center} which is the main result of \S\ref{subsec: bounds for H1(S2,K)}. We do so by combining the bound for compact subgroups established in Lemma~\ref{lem: bounds on coho of compact} and the bound for tori established in Lemma~\ref{lem: bound on coho of quotient of a torus}.

\begin{proof}[Proof of Lemma~\ref{lem: bounds on coho of compact-mod-center}]
    Set $K(e)\coloneqq K \cap \theta(K)$. By Lemma~\ref{lem: K' is compact}, $K(e)'\coloneqq K(e)\cap  G_{\opr{der}}$ is compact, and by Lemma~\ref{lem: compact times center fin ind in compact-mod-center} there exists a constant $C'=C'(r)$ such that $K(e)/(K(e)'Z)$ is an abelian group of size at most $C'$, where $Z$ is the maximal central torus of $G$. Note that both $K(e)'$ and $Z$ are $\theta$-stable.

    By Lemma~\ref{lem: exact sequence of coho} we have
    \[
        |H^1(\langle \theta \rangle, K(e))| \leq
        |H^1(\langle \theta \rangle, K(e)'Z)| \cdot
        |H^1(\langle \theta \rangle, K(e)/K(e)'Z)|.
    \]

    By Corollary~\ref{cor: bound coho finite abelian},
    \[
        |H^1(\langle \theta \rangle, K(e)/K(e)'Z)| \leq C'^2.
    \]
    Applying Lemma~\ref{lem: exact sequence of coho} a second time, we get
    \[
        |H^1(\langle \theta \rangle, K(e)'Z)| \leq
        |H^1(\langle \theta \rangle, K(e)')| \cdot
        |H^1(\langle \theta \rangle, K(e)'Z/K(e)')|.
    \]
    By Lemma~\ref{lem: bounds on coho of compact}, $|H^1(\langle \theta \rangle, K(e)')|$ is bounded by a constant depending on $r$. Note that
    \[
        K(e)'Z/K(e)'
        \cong Z/(Z\cap K(e)')
        \cong Z/(Z \cap K(e)\cap G_{\opr{der}})
        \cong Z/(Z\cap G_{\opr{der}}).
    \]

    By Lemma~\ref{lem: size of central isogeny}, $Z\cap G_{\opr{der}}$ embeds into $\ker(\mathbf m) (F^{\opr{sep}})$, whose size is bounded by a constant $C''=C''(r)$. By Lemma~\ref{lem: bound on coho of quotient of a torus} we have
    \[
        |H^1(\langle \theta \rangle, Z/(Z\cap G_{\opr{der}}))|
        \leq C'' \cdot 2^{3\rank(\mathbf{S})}
        \leq C'' \cdot 2^{3\rank(\mathbf{G})}.
    \]
\end{proof}
\begin{remark}
    \label{rem: better bounds for coho of compact}
    The bounds established in Lemma~\ref{lem: bounds on coho of compact} for compact subgroups — and consequently in Lemma~\ref{lem: bounds on coho of compact-mod-center} for compact-mod-center subgroups — are likely far from optimal. In particular, they depend on the residue degree $f_F$, a dependence which may well be avoidable.

    Nevertheless, since our final result (Theorem~\ref{thm: multiplicity bound unramified}) ultimately relies on \cite[Corollary B]{aizenbud2024bounds}, which itself exhibits this dependence, there is little incentive to remove it within the present framework.

    Still, it is plausible that a more refined analysis could eliminate the $f_F$ term. The dependence arises through \cite[Proposition 6.0.5]{aizenbud2024bounds}, which bounds $H^1(\langle \theta \rangle, \Gamma)$ for finite groups $\Gamma$ via embeddings into $\GL_n(\bF_p)$.

    One alternative is to exploit \cite[Lemma 6.0.1]{aizenbud2024bounds}, which provides bounds in terms of $\rank(\mathbf G)$ whenever $\Gamma$ is of the form $\mathbf G(\ff)$ for some connected reductive group $\mathbf G$ over $\ff$. A more detailed analysis using Bruhat--Tits theory may show that the finite groups occurring in our setting are always of this form — or at least sufficiently close to allow this refinement.

    Another possibility is to strengthen the arguments in \cite[\S6]{aizenbud2024bounds} directly so as to remove the $f_F$ dependence. A more careful application of results from \cite{larsen2011finite} may yield such an improvement. The authors of \cite{aizenbud2024bounds} did not pursue this, as their bounds already carried an independent dependence on $f_F$, arising from a different use of \cite{larsen2011finite}.
\end{remark}
\subsection{\texorpdfstring{$(K,H)$}{(K,H)} double cosets}\label{subsec: (K,H) double cosets}
In Proposition~\ref{prop: cuspidal to compact}, the only contributing double cosets in Mackey’s formula correspond to a single fiber of the symmetrization map \( \iota_K \). By Lemma~\ref{lem: fiber to cohomology}, the size of this fiber is bounded by the size of \( H^1(C_2, K) \). Finally, Lemma~\ref{lem: bounds on coho of compact-mod-center} bounds this cohomology uniformly, and we obtain the following consequence.
\begin{corollary}
    \label{cor: (K,H) double cosets bound}
    Fix positive integers $d,r$. There exist constants $C = C(d,r)$ such that for every
    \begin{itemize}
        \item $p$-adic field $F$ with odd residue characteristic and residue degree $f_F \leq d$;
        \item connected reductive group $\mathbf G$ over $F$, with $\rank(\mathbf G)\leq r$;
        \item  rational involution $\theta$ of $G\coloneqq \mathbf G(F)$  with $H = G^\theta$;
        \item  compact-mod-center subgroup $K \subseteq G$;
        \item $(K,\theta(K))$-double coset $\alpha \in K \backslash G / \theta(K)$;
    \end{itemize}
    we have
    \[
        |\left\{ x \in K \backslash G / H \mid \iota_K(x) = \alpha \right\}|<C.
    \]
\end{corollary}
\subsection{\texorpdfstring{$(P,H)$}{(P,H)} double cosets}\label{sec: (P,H) double cosets}

The finiteness of the number of $(P, H)$ double cosets was established in \cite[§6]{helminck1993rationality}, based on earlier work in \cite{springer1988some}. A refined argument appears in \cite[Proposition 3.2]{delorme2006analogue}, which uses group cohomology of $C_2$ to provide an explicit bound. Our approach is similar to \cite{delorme2006analogue}, but recasts it in the language developed in the previous subsections, and achieves an explicit bound.

\begin{lemma}
    \label{lem: (P,H) double cosets bound}
    Fix positive integers $d,r$. There exist constants $C = C(d,r)$ such that for every
    \begin{itemize}
        \item $p$-adic field $F$ with odd residue characteristic and residue degree $f_F \leq d$;
        \item connected reductive group $\mathbf G$ over $F$, with $\rank(\mathbf G)\leq r$;
        \item  rational involution $\theta$ of $G\coloneqq \mathbf G(F)$  with $H = G^\theta$;
        \item  standard parabolic subgroup $P \subseteq G$;
    \end{itemize}
    we have
    \[
        |P \backslash G / H|<C.
    \]
\end{lemma}
\begin{proof}
    We may assume without loss of generality that $P$ is the minimal standard parabolic $P_0$, as in \S\ref{subsec: p-adic Representations and Symmetric Spaces}\eqref{notation: standard}. Recall that we assumed that $P_0$ contains a fixed $\theta$-stable maximal $F$-split torus $S$.

    Let $N_G(S)$ be the normalizer and $Z_G(S)$ the centralizer of $S$ in $G$, and set $W\coloneqq N_G(S)/Z_G(S)$ be the Weyl group of $G$.
    By \cite[Lemma 6.8]{helminck1993rationality} we have
    \[
        |P_0 \backslash G / H| = |\iota_{Z_G(S)}^{-1}(Z_G(S)\backslash N_G(S)/ Z_G(S))|
    \]

    Note that
    \[
        |Z_G(S)\backslash N_G(S)/ Z_G(S)| = |W|
    \]
    which is bounded as a function of $\rank(\mathbf G)$.  Moreover, we have that $Z_G(S)$ is $w.\theta$-stable for any $w\in W$. Therefore, as $|W|$ is bounded, by Lemma~\ref{lem: fiber to cohomology}, it is enough to bound
    \begin{equation}
        \tag{$\star$}\label{eq: H1(w.theta,Z_G(S))}
        |H^1(\langle w.\theta \rangle,Z_G(S))|
    \end{equation}
    for any $w\in W$.

    As explained in \S\ref{subsec: p-adic Representations and Symmetric Spaces}\eqref{notation: standard}, $Z_G(S)=\mathbf{Z_G(S)}(F)$. Moreover, $Z_G(S)$ is compact modulo its center. Therefore we may apply Lemma~\ref{lem: bounds on coho of compact-mod-center}\footnote{Here $Z_G(S)$ plays the role of both the ambient group and the compact-mod-center subgroup.} to obtain a uniform bound on \eqref{eq: H1(w.theta,Z_G(S))} in terms of $\rank(\mathbf{Z_G(S)}) \leq \rank (\mathbf{G})$ and $f_F$.

    We conclude that
    \[
        |P\backslash G /H| \leq |W| \cdot \sup_{w\in W} \left| H^1(\langle w . \theta \rangle, Z_G(S)) \right|,
    \]
    and both terms on the right-hand side are uniformly bounded.
\end{proof}

\begin{remark}\label{rem: proof of prop: bounded fibers}
    This result is enough for the proof of Theorem~\ref{thm: multiplicity bound unramified}. In order to deduce Proposition~\ref{prop: bounded fibers}, note  that as
    \[
        |P\backslash G /H| = \sum_{y\in \iota_P(P\backslash G /H)} |\iota_P^{-1}(y)|,
    \]
    we also bound the size of each fiber.
\end{remark}

\begin{remark}
    \label{rem: M-admissible refinement}
    Again, the bound in Lemma~\ref{lem: (P,H) double cosets bound} is likely far from optimal.
    First, as noted in Remark~\ref{rem: better bounds for coho of compact}, the bounds on cohomology sets in Lemma~\ref{lem: bounds on coho of compact-mod-center} are themselves quite large.
    Second, in our specific application in Theorem~\ref{thm: multiplicity bound unramified},
    not all $(P,H)$-double cosets contribute to the multiplicity formula.

    As stated after Proposition~\ref{prop: offen geometric lemma for cuspidal},
    \cite{offen2017parabolic} shows that the relevant cosets satisfy
    \emph{$M$-admissibility} (see \cite[\S4]{offen2017parabolic}),
    and studies their structure in detail.
    A careful analysis---possibly exploiting the results of \cite{offen2017parabolic}
    on $M$-admissibility and the geometry of the relative Weyl group---may yield sharper bounds
    on the number of such cosets.
\end{remark}

\section{Proof of Main Results}\label{sec: proof of main results}
We are now ready to prove the main results of this paper, Theorems~\ref{thm: multiplicity bound unramified} and \ref{thm: global bound}.
\subsection{Bounds over local fields}\label{sec: bounds over local fields}

Theorem~\ref{thm: multiplicity bound unramified} is the core local result of this paper and it provides a substantial verification of \cite[Conjecture~C]{aizenbud2024bounds} in the setting of symmetric spaces.
\begin{theorem}
    \label{thm: multiplicity bound unramified}
    Fix positive integers $d,r$. There exists a constant $C = C(d,r)$ such that for every
    \begin{itemize}
        \item $p$-adic field $F$ with residue characteristic $p > r+1$ and residue degree at most $d$;
        \item tamely ramified connected reductive group $\mathbf G$ over $F$ with $\rank(\mathbf G) \le r$;
        \item rational involution $\theta$ of $G \coloneqq \mathbf G(F)$, with $H = G^\theta$;
        \item smooth admissible irreducible representation $\rho$ of $G$;
        \item smooth unramified character $\chi$ of $H$;
    \end{itemize}
    we have
    \[
        \left\langle
        \rho,\,
        C^\infty(G/H, \chi)
        \right\rangle_G
        < C.
    \]
\end{theorem}

\begin{proof}
    \textbf{Step 1: Reduction to the supercuspidal case.}
    By \cite[Proposition 3.19]{Bernstein1977}, there exists a standard Levi subgroup $M \coloneqq \mathbf M(F) \subseteq G$ and a supercuspidal representation $\sigma$ of $M$ such that $\rho \hookrightarrow i_{M,G}(\sigma)$. Applying Offen’s formula (Proposition~\ref{prop: offen geometric lemma for cuspidal}), we obtain
    \begin{equation}\label{eq: parbolic mackey}
        \left\langle
        i_{M,G}(\sigma),\
        C^\infty(G/H, \chi)
        \right\rangle_G
        =
        \sum_{\eta \in \cW}
        \left\langle
        \sigma,\
        C^\infty\!\left(M / M^{\eta.\theta}, \Delta_\eta \cdot \chi \circ \operatorname{int}_{\eta^{-1}} \right)
        \right\rangle_M,
    \end{equation}
    where $\cW$ is a finite set of $(P,H)$-double coset representatives. Since $|\cW|$ is bounded (Lemma~\ref{lem: (P,H) double cosets bound}), $M$ is tamely ramified (Lemma~\ref{lem: levi of tamely ramified is tamely ramified}), the character $\Delta_\eta \cdot \chi \circ \operatorname{int}_{\eta^{-1}}$ is unramified, and $\rank(\mathbf M) \le \rank(\mathbf G)$, it suffices to bound each summand. Thus, we may assume that $\rho$ is supercuspidal and $(H,\chi)$-distinguished.

    \textbf{Step 2: Reduction to compact-mod-center.}
    Assume $\rho$ is supercuspidal and $(H,\chi)$-distinguished. Since the residue characteristic satisfies $p > \rank(\mathbf G)+1$, we may apply Proposition~\ref{prop: cuspidal to compact} to obtain
    \begin{equation}\label{eq: compact mackey}
        \langle \rho, C^\infty(G/H,\chi) \rangle_G
        =
        \sum_{\zeta \in \cZ}
        \left\langle
        \pi,\,
        C^\infty\!\left(
        K / K^{\zeta g_0.\theta}
        \right)
        \right\rangle_K,
    \end{equation}
    where $K \subseteq G$ is a compact-mod-center subgroup, $\pi$ is an irreducible representation of $K$, $g_0 \in G$, and $\cZ$ is a finite set of $(K,H)$-double coset representatives. By Corollary~\ref{cor: (K,H) double cosets bound}, $|\cZ|$ is bounded, so it suffices to bound each summand. We therefore fix $K$, $\pi$, and a $K$-preserving rational involution $\theta'$.

    \textbf{Step 3: Reduction to the compact case.}
    Let $K' \coloneqq K \cap G_{\opr{der}}$. By Lemma~\ref{lem: compact-mod-center to compact}, there exists an irreducible representation $\tau$ of $K'$ such that
    \[
        \langle \pi, C^\infty(K/K^{\theta'}) \rangle_K
        \le
        \sum_{\gamma \in K'Z \backslash K / K^{\theta'}}
        \left\langle
        \tau,\,
        C^\infty\!\left(
        K' / (K')^{\gamma.\theta'}
        \right)
        \right\rangle_{K'}.
    \]
    Moreover, by Lemma~\ref{lem: compact times center fin ind in compact-mod-center}, the number of double cosets $|K'Z \backslash K|$ is bounded in terms of $\rank(\mathbf G)$.

    \textbf{Step 4: The compact case.}
    We conclude by applying \cite[Corollary B]{aizenbud2024bounds} to $\tau$, which yields a uniform bound depending only on $\rank(\mathbf G)$ and $f_F$. Although \cite[Corollary B]{aizenbud2024bounds} is stated for fields that are purely ramified extensions of $\bQ_p$, the argument extends by replacing the final embedding in the proof of \cite[Corollary B]{aizenbud2024bounds} with an embedding into $\GL_{f_F \cdot C^{\mathrm{lin}}(\rank(\mathbf G))}(\bF_p)$.
\end{proof}

\subsection{Global bounds}
Theorem~\ref{thm: multiplicity bound unramified} enables us to derive the following global result.

\begin{theorem}
    \label{thm: global bound}
    Fix positive integers $d,r$. There exist constants $C_1 = C_1(d,r)$ and $C_2 = C_2(d,r)$ such that for every
    \begin{itemize}
        \item number field $L$ with $[L : \bQ] \le d$;
        \item connected reductive group $\mathbf G$ over $L$ with $\rank(\mathbf G) \leq r$;
        \item finite place $v$ of $L$ such that the completion $L_v$ at $v$ has residue characteristic $p_v$ greater than $C_1$;
        \item rational involution $\theta$ of $\mathbf G$;
        \item irreducible smooth representation $\rho$ of $\mathbf G(L_v)$;
    \end{itemize}
    we have
    \[
        \langle \rho,\, C^\infty\!\left(\mathbf G(L_v)/\mathbf G(L_v)^{\theta_v}\right) \rangle_{\mathbf G(L_v)} < C_2.
    \]
\end{theorem}

\begin{proof}[Proof of Theorem~\ref{thm: global bound}]
    By Lemma~\ref{lem: bounded bad primes}, there exists a constant $C_1'$ such that for all finite places $v$ with $p_v > C_1'$, the group $G(L_v)$ is tamely ramified.

    Set $C_1 := \max\{C_1', r + 1\}$.
    By Theorem~\ref{thm: multiplicity bound unramified}, for each place $v$ with $p_v > C_1$ there exists a constant $C_2'(v)$, depending only on $\rank(\mathbf G_{L_v})$ and the residue degree $f_v$ of $L_v$, such that
    \[
        \langle \rho, C^\infty(G(L_v)/G(L_v)^{\theta_v}) \rangle < C_2'(v).
    \]
    Finally, by Lemmas~\ref{lem: residue field degree} and~\ref{lem: rank descent}, both $f_v$ and $\rank(\mathbf G_{L_v})$ are bounded in terms of $[L : \bQ]$ and $\rank(\mathbf G)$. Thus, $C_2$ can be chosen uniformly in $v$, completing the proof.
\end{proof}

\bibliographystyle{alpha}
\bibliography{references.bib}

@inproceedings{aizenbud2024bounds,
  title        = {Bounds on multiplicities of symmetric pairs of finite groups},
  author       = {Aizenbud, Avraham and Avni, Nir},
  booktitle    = {Forum of Mathematics, Sigma},
  volume       = {12},
  pages        = {e73},
  year         = {2024},
  organization = {Cambridge University Press}
}

@misc{Milne_reductive_groups,
  title     = {Reductive groups},
  author    = {Milne, James S},
  year      = {2012},
  publisher = {JS Milne}
}

@article{larsen2011finite,
  title   = {Finite subgroups of algebraic groups},
  author  = {Larsen, Michael and Pink, Richard},
  journal = {Journal of the American Mathematical Society},
  volume  = {24},
  number  = {4},
  pages   = {1105--1158},
  year    = {2011}
}

@article{offen2017parabolic,
  title     = {On parabolic induction associated with a p-adic symmetric space},
  author    = {Offen, Omer},
  journal   = {Journal of Number Theory},
  volume    = {170},
  pages     = {211--227},
  year      = {2017},
  publisher = {Elsevier}
}

@article{delorme2006analogue,
  title   = {An analogue of the Cartan decomposition for p-adic reductive symmetric spaces},
  author  = {Delorme, Patrick and Secherre, Vincent},
  journal = {arXiv preprint math/0612545},
  year    = {2006}
}

@article{benoist2007polar,
  title     = {Polar decomposition for p-adic symmetric spaces},
  author    = {Benoist, Yves and Oh, Hee},
  journal   = {International Mathematics Research Notices},
  volume    = {2007},
  pages     = {rnm121},
  year      = {2007},
  publisher = {Oxford University Press}
}

@article{helminck1993rationality,
  title     = {On rationality properties of involutions of reductive groups},
  author    = {Helminck, Aloysius G and Wang, Shu Ping},
  journal   = {Adv. Math},
  volume    = {99},
  number    = {1},
  pages     = {26--96},
  year      = {1993},
  publisher = {Citeseer}
}

@article{hakim2008distinguished,
  title     = {Distinguished tame supercuspidal representations},
  author    = {Hakim, Jeffrey and Murnaghan, Fiona},
  journal   = {International Mathematics Research Papers},
  volume    = {2008},
  pages     = {rpn005},
  year      = {2008},
  publisher = {Oxford University Press}
}

@book{springer1988some,
  title     = {Some results on algebraic groups with involutions},
  author    = {Springer, Tonny Albert and others},
  year      = {1988},
  publisher = {Mathematisch Institut der R{\"y}ksuniversiteit}
}

@book{serre1979galois,
  title     = {Galois cohomology},
  author    = {Serre, Jean-Pierre},
  year      = {1979},
  publisher = {Springer}
}

@article{fintzen2021types,
  title     = {Types for tame p-adic groups},
  author    = {Fintzen, Jessica},
  journal   = {Annals of Mathematics},
  volume    = {193},
  number    = {1},
  pages     = {303--346},
  year      = {2021},
  publisher = {Department of Mathematics, Princeton University Princeton, New Jersey, USA}
}

@article{kim2007supercuspidal,
  title   = {Supercuspidal representations: an exhaustion theorem},
  author  = {Kim, Ju-Lee},
  journal = {Journal of the American Mathematical Society},
  volume  = {20},
  number  = {2},
  pages   = {273--320},
  year    = {2007}
}

@article{yu2001construction,
  title   = {Construction of tame supercuspidal representations},
  author  = {Yu, Jiu-Kang},
  journal = {Journal of the American Mathematical Society},
  volume  = {14},
  number  = {3},
  pages   = {579--622},
  year    = {2001}
}

@article{bernstein1976representations,
  title     = {Representations of the group GL(n,F) where F is a non-Archimedean local field},
  author    = {Bernstein, Joseph and Zelevinskii, Andrei Vladlenovich},
  journal   = {Uspekhi Matematicheskikh Nauk},
  volume    = {31},
  number    = {3},
  pages     = {5--70},
  year      = {1976},
  publisher = {Russian Academy of Sciences, Steklov Mathematical Institute of Russian~…}
}

@article{delorme2010constant,
  title   = {Constant term of smooth H psi-spherical functions on a reductive p-adic group},
  author  = {Delorme, Patrick},
  journal = {Transactions of the American Mathematical Society},
  volume  = {362},
  number  = {2},
  pages   = {933--955},
  year    = {2010}
}

@inproceedings{van1987asymptotic,
  title        = {Asymptotic behaviour of matrix coefficients related to reductive symmetric spaces},
  author       = {van den Ban, Erik P},
  booktitle    = {Indagationes Mathematicae (Proceedings)},
  volume       = {90},
  number       = {3},
  pages        = {225--249},
  year         = {1987},
  organization = {North-Holland}
}

@misc{sakellaridis2017periodsharmonicanalysisspherical,
  title         = {Periods and harmonic analysis on spherical varieties},
  author        = {Yiannis Sakellaridis and Akshay Venkatesh},
  year          = {2017},
  eprint        = {1203.0039},
  archiveprefix = {arXiv},
  primaryclass  = {math.RT},
  url           = {https://arxiv.org/abs/1203.0039}
}

@article{Kobayashi_2013,
  title     = {Finite multiplicity theorems for induction and restriction},
  volume    = {248},
  issn      = {0001-8708},
  url       = {http://dx.doi.org/10.1016/j.aim.2013.07.015},
  doi       = {10.1016/j.aim.2013.07.015},
  journal   = {Advances in Mathematics},
  publisher = {Elsevier BV},
  author    = {Kobayashi, Toshiyuki and Oshima, Toshio},
  year      = {2013},
  month     = nov,
  pages     = {921–944}
}

@misc{krötz2014finiteorbitdecompositionreal,
  title         = {Finite orbit decomposition of real flag manifolds},
  author        = {Bernhard Krötz and Henrik Schlichtkrull},
  year          = {2014},
  eprint        = {1307.2375},
  archiveprefix = {arXiv},
  primaryclass  = {math.RT},
  url           = {https://arxiv.org/abs/1307.2375}
}

@misc{aizenbud2016holonomicityrelativecharactersapplications,
  title         = {Holonomicity of relative characters and applications to multiplicity bounds for spherical pairs},
  author        = {Avraham Aizenbud and Dmitry Gourevitch and Andrey Minchenko},
  year          = {2016},
  eprint        = {1501.01479},
  archiveprefix = {arXiv},
  primaryclass  = {math.RT},
  url           = {https://arxiv.org/abs/1501.01479}
}

@book{serre2013local,
  title     = {Local fields},
  author    = {Serre, Jean-Pierre},
  volume    = {67},
  year      = {2013},
  publisher = {Springer Science \& Business Media}
}

@book{lang1994algebraic,
  title     = {Algebraic number theory},
  author    = {Lang, Serge},
  volume    = {110},
  year      = {1994},
  publisher = {Springer Science \& Business Media}
}

@article{AAG,
  author   = {Aizenbud, Avraham and Avni, Nir and Gourevitch, Dmitry},
  title    = {Spherical pairs over close local fields},
  journal  = {Comment. Math. Helv.},
  fjournal = {Commentarii Mathematici Helvetici. A Journal of the Swiss
              Mathematical Society},
  volume   = {87},
  year     = {2012},
  number   = {4},
  pages    = {929--962},
  issn     = {0010-2571},
  mrclass  = {20G05 (14L35 20G25)},
  mrnumber = {2984577},
  doi      = {10.4171/CMH/274},
  url      = {https://doi.org/10.4171/CMH/274}
}

@article{AG_HC,
  author     = {Aizenbud, Avraham and Gourevitch, Dmitry},
  title      = {Generalized {H}arish-{C}handra descent, {G}elfand pairs, and
                an {A}rchimedean analog of {J}acquet-{R}allis's theorem},
  note       = {With an appendix by the authors and Eitan Sayag},
  journal    = {Duke Math. J.},
  fjournal   = {Duke Mathematical Journal},
  volume     = {149},
  year       = {2009},
  number     = {3},
  pages      = {509--567},
  issn       = {0012-7094},
  mrclass    = {22E50 (14L24 20G05 22E45)},
  mrnumber   = {2553879},
  mrreviewer = {Oksana S. Yakimova},
  doi        = {10.1215/00127094-2009-044},
  url        = {https://doi.org/10.1215/00127094-2009-044}
}

@article{AG_AMOT,
  author     = {Aizenbud, Avraham and Gourevitch, Dmitry},
  title      = {Multiplicity one theorem for {$({\rm GL}_{n+1}(\Bbb R),{\rm
                GL}_n(\Bbb R))$}},
  journal    = {Selecta Math. (N.S.)},
  fjournal   = {Selecta Mathematica. New Series},
  volume     = {15},
  year       = {2009},
  number     = {2},
  pages      = {271--294},
  issn       = {1022-1824},
  mrclass    = {22E45 (20G05 22E30)},
  mrnumber   = {2529937},
  mrreviewer = {Anton Deitmar},
  doi        = {10.1007/s00029-009-0544-7},
  url        = {https://doi.org/10.1007/s00029-009-0544-7}
}

@article{AG_RegPar,
  author     = {Aizenbud, Avraham and Gourevitch, Dmitry},
  title      = {Some regular symmetric pairs},
  journal    = {Trans. Amer. Math. Soc.},
  fjournal   = {Transactions of the American Mathematical Society},
  volume     = {362},
  year       = {2010},
  number     = {7},
  pages      = {3757--3777},
  issn       = {0002-9947},
  mrclass    = {20G05 (20E45 20G25 22E45)},
  mrnumber   = {2601608},
  mrreviewer = {Vladimir V. Shchigolev},
  doi        = {10.1090/S0002-9947-10-05008-7},
  url        = {https://doi.org/10.1090/S0002-9947-10-05008-7}
}

@article{AG_M1J,
  author     = {Aizenbud, Avraham and Gourevitch, Dmitry},
  title      = {Multiplicity free {J}acquet modules},
  journal    = {Canad. Math. Bull.},
  fjournal   = {Canadian Mathematical Bulletin. Bulletin Canadien de
                Math\'{e}matiques},
  volume     = {55},
  year       = {2012},
  number     = {4},
  pages      = {673--688},
  issn       = {0008-4395},
  mrclass    = {20G05 (20G25 20G40 46F10 47A67)},
  mrnumber   = {2994673},
  mrreviewer = {Wen-Wei Li},
  doi        = {10.4153/CMB-2011-127-8},
  url        = {https://doi.org/10.4153/CMB-2011-127-8}
}

@article{AGRS,
  author     = {Aizenbud, Avraham and Gourevitch, Dmitry and Rallis, Stephen
                and Schiffmann, G\'{e}rard},
  title      = {Multiplicity one theorems},
  journal    = {Ann. of Math. (2)},
  fjournal   = {Annals of Mathematics. Second Series},
  volume     = {172},
  year       = {2010},
  number     = {2},
  pages      = {1407--1434},
  issn       = {0003-486X},
  mrclass    = {22E50 (20G05)},
  mrnumber   = {2680495},
  mrreviewer = {Christian A. Zorn},
  doi        = {10.4007/annals.2010.172.1413},
  url        = {https://doi.org/10.4007/annals.2010.172.1413}
}

@article{AGS,
  author     = {Aizenbud, Avraham and Gourevitch, Dmitry and Sayag, Eitan},
  title      = {{$({\rm GL}_{n+1}(F),{\rm GL}_n(F))$} is a {G}elfand pair for
                any local field {$F$}},
  journal    = {Compos. Math.},
  fjournal   = {Compositio Mathematica},
  volume     = {144},
  year       = {2008},
  number     = {6},
  pages      = {1504--1524},
  issn       = {0010-437X},
  mrclass    = {22E50 (20G05 20G25 22E45)},
  mrnumber   = {2474319},
  mrreviewer = {Dubravka Ban},
  doi        = {10.1112/S0010437X08003746},
  url        = {https://doi.org/10.1112/S0010437X08003746}
}

@article{AGJ,
  author     = {Aizenbud, Avraham and Gourevitch, Dmitry and Jacquet, Herv\'{e}},
  title      = {Uniqueness of {S}halika functionals: the {A}rchimedean case},
  journal    = {Pacific J. Math.},
  fjournal   = {Pacific Journal of Mathematics},
  volume     = {243},
  year       = {2009},
  number     = {2},
  pages      = {201--212},
  issn       = {0030-8730},
  mrclass    = {22E45},
  mrnumber   = {2552255},
  mrreviewer = {Henry H. Kim},
  doi        = {10.2140/pjm.2009.243.201},
  url        = {https://doi.org/10.2140/pjm.2009.243.201}
}

@article{Aiz,
  author     = {Aizenbud, Avraham},
  title      = {A partial analog of the integrability theorem for
                distributions on {$p$}-adic spaces and applications},
  journal    = {Israel J. Math.},
  fjournal   = {Israel Journal of Mathematics},
  volume     = {193},
  year       = {2013},
  number     = {1},
  pages      = {233--262},
  issn       = {0021-2172},
  mrclass    = {32P05 (22E50 46S10 58A30)},
  mrnumber   = {3038552},
  mrreviewer = {Anton Deitmar},
  doi        = {10.1007/s11856-012-0088-y},
  url        = {https://doi.org/10.1007/s11856-012-0088-y}
}

@article{AS,
  author     = {Aizenbud, Avraham and Sayag, Eitan},
  title      = {Invariant distributions on non-distinguished nilpotent orbits
                with application to the {G}elfand property of {$(GL_{2n}(\Bbb
                R),\,Sp_{2n}(\Bbb R))$}},
  journal    = {J. Lie Theory},
  fjournal   = {Journal of Lie Theory},
  volume     = {22},
  year       = {2012},
  number     = {1},
  pages      = {137--153},
  issn       = {0949-5932},
  mrclass    = {22C99 (20G05 22E45 46F10)},
  mrnumber   = {2859029},
  mrreviewer = {Oksana S. Yakimova}
}

@article{BvD94,
  author     = {Bosman, E. P. H. and van Dijk, G.},
  title      = {A new class of {G}elfand pairs},
  journal    = {Geom. Dedicata},
  fjournal   = {Geometriae Dedicata},
  volume     = {50},
  year       = {1994},
  number     = {3},
  pages      = {261--282},
  issn       = {0046-5755},
  mrclass    = {22E50},
  mrnumber   = {1286380},
  mrreviewer = {Stephen Gelbart},
  doi        = {10.1007/BF01267869},
  url        = {https://doi.org/10.1007/BF01267869}
}

@article{CS,
  author     = {Chen, Fulin and Sun, Binyong},
  title      = {Uniqueness of {R}ankin-{S}elberg periods},
  journal    = {Int. Math. Res. Not. IMRN},
  fjournal   = {International Mathematics Research Notices. IMRN},
  year       = {2015},
  number     = {14},
  pages      = {5849--5873},
  issn       = {1073-7928},
  mrclass    = {11F70},
  mrnumber   = {3384460},
  mrreviewer = {U. K. Anandavardhanan},
  doi        = {10.1093/imrn/rnu110},
  url        = {https://doi.org/10.1093/imrn/rnu110}
}

@article{Fli91,
  author     = {Flicker, Yuval Z.},
  title      = {On distinguished representations},
  journal    = {J. Reine Angew. Math.},
  fjournal   = {Journal f\"{u}r die Reine und Angewandte Mathematik. [Crelle's
                Journal]},
  volume     = {418},
  year       = {1991},
  pages      = {139--172},
  issn       = {0075-4102},
  mrclass    = {22E55 (11F70 22E35 22E50)},
  mrnumber   = {1111204},
  mrreviewer = {Dipendra Prasad},
  doi        = {10.1515/crll.1991.418.139},
  url        = {https://doi.org/10.1515/crll.1991.418.139}
}

@inproceedings{GK,
  author     = {Gel{'}fand, I. M. and Kajdan, D. A.},
  title      = {Representations of the group {${\rm GL}(n,K)$} where {$K$} is
                a local field},
  booktitle  = {Lie groups and their representations ({P}roc. {S}ummer
                {S}chool, {B}olyai {J}\'{a}nos {M}ath. {S}oc., {B}udapest, 1971)},
  pages      = {95--118},
  year       = {1975},
  mrclass    = {22E50 (10D20 12A65)},
  mrnumber   = {0404534},
  mrreviewer = {Stephen Gelbart}
}

@article{JSZ1,
  author     = {Jiang, Dihua and Sun, Binyong and Zhu, Chen-Bo},
  title      = {Uniqueness of {G}inzburg-{R}allis models: the {A}rchimedean
                case},
  journal    = {Trans. Amer. Math. Soc.},
  fjournal   = {Transactions of the American Mathematical Society},
  volume     = {363},
  year       = {2011},
  number     = {5},
  pages      = {2763--2802},
  issn       = {0002-9947},
  mrclass    = {22E55 (11F70)},
  mrnumber   = {2763736},
  mrreviewer = {David A. Renard},
  doi        = {10.1090/S0002-9947-2010-05285-7},
  url        = {https://doi.org/10.1090/S0002-9947-2010-05285-7}
}

@article{JSZ2,
  author   = {Jiang, Dihua and Sun, Binyong and Zhu, Chen-Bo},
  title    = {Uniqueness of {B}essel models: the {A}rchimedean case},
  journal  = {Geom. Funct. Anal.},
  fjournal = {Geometric and Functional Analysis},
  volume   = {20},
  year     = {2010},
  number   = {3},
  pages    = {690--709},
  issn     = {1016-443X},
  mrclass  = {22E45},
  mrnumber = {2720228},
  doi      = {10.1007/s00039-010-0077-4},
  url      = {https://doi.org/10.1007/s00039-010-0077-4}
}

@book{Nie,
  author    = {Nien, Chufeng},
  title     = {Models of representations of general linear groups over p-adic
               fields},
  note      = {Thesis (Ph.D.)--University of Minnesota},
  publisher = {ProQuest LLC, Ann Arbor, MI},
  year      = {2006},
  pages     = {110},
  isbn      = {978-0542-81505-8},
  mrclass   = {Thesis},
  mrnumber  = {2709083},
  url       = {http://gateway.proquest.com/openurl?url_ver=Z39.88-2004&rft_val_fmt=info:ofi/fmt:kev:mtx:dissertation&res_dat=xri:pqdiss&rft_dat=xri:pqdiss:3227575}
}

@article{OS,
  author     = {Offen, Omer and Sayag, Eitan},
  title      = {Uniqueness and disjointness of {K}lyachko models},
  journal    = {J. Funct. Anal.},
  fjournal   = {Journal of Functional Analysis},
  volume     = {254},
  year       = {2008},
  number     = {11},
  pages      = {2846--2865},
  issn       = {0022-1236},
  mrclass    = {22E50},
  mrnumber   = {2414223},
  mrreviewer = {Loren R. Spice},
  doi        = {10.1016/j.jfa.2008.01.004},
  url        = {https://doi.org/10.1016/j.jfa.2008.01.004}
}

@article{Rub,
  title     = {On the Gelfand property for complex symmetric pairs},
  url       = {http://dx.doi.org/10.1090/tran/8799},
  doi       = {10.1090/tran/8799},
  publisher = {American Mathematical Society (AMS)},
  author    = {Rubio, Roberto},
  year      = {2022},
  month     = sep
}

@article{Sha,
  author     = {Shalika, J. A.},
  title      = {The multiplicity one theorem for {${\rm GL}_{n}$}},
  journal    = {Ann. of Math. (2)},
  fjournal   = {Annals of Mathematics. Second Series},
  volume     = {100},
  year       = {1974},
  pages      = {171--193},
  issn       = {0003-486X},
  mrclass    = {22E50},
  mrnumber   = {348047},
  mrreviewer = {A. J. Coleman},
  doi        = {10.2307/1971071},
  url        = {https://doi.org/10.2307/1971071}
}

@article{She,
  author  = {Shechter, S.},
  title   = {{Bounds on multiplicities of spherical spaces over {finite} {fields} --  the general case}},
  journal = {arXiv:1912.03694}
}

@article{SZ,
  author     = {Sun, Binyong and Zhu, Chen-Bo},
  title      = {Multiplicity one theorems: the {A}rchimedean case},
  journal    = {Ann. of Math. (2)},
  fjournal   = {Annals of Mathematics. Second Series},
  volume     = {175},
  year       = {2012},
  number     = {1},
  pages      = {23--44},
  issn       = {0003-486X},
  mrclass    = {22E45 (22E30)},
  mrnumber   = {2874638},
  mrreviewer = {Andre Reznikov},
  doi        = {10.4007/annals.2012.175.1.2},
  url        = {https://doi.org/10.4007/annals.2012.175.1.2}
}

@article{vD86,
  author     = {van Dijk, G.},
  title      = {On a class of generalized {G}el{'}fand pairs},
  journal    = {Math. Z.},
  fjournal   = {Mathematische Zeitschrift},
  volume     = {193},
  year       = {1986},
  number     = {4},
  pages      = {581--593},
  issn       = {0025-5874},
  mrclass    = {22E46 (43A80)},
  mrnumber   = {867349},
  mrreviewer = {Michael Cowling},
  doi        = {10.1007/BF01160476},
  url        = {https://doi.org/10.1007/BF01160476}
}

@book{Yak,
  author     = {Yakimova, Oksana},
  title      = {Gelfand pairs},
  series     = {Bonner Mathematische Schriften [Bonn Mathematical
                Publications]},
  volume     = {374},
  note       = {Dissertation, Rheinische Friedrich-Wilhelms-Universit\"{a}t Bonn,
                Bonn, 2004},
  publisher  = {Universit\"{a}t Bonn, Mathematisches Institut, Bonn},
  year       = {2005},
  pages      = {front matter+ii+95},
  mrclass    = {22E45 (22E30)},
  mrnumber   = {2206354},
  mrreviewer = {Dmitri I. Panyushev}
}

@article{Zha,
  author     = {Zhang, Lei},
  title      = {Gelfand pairs {$({\rm Sp}_{4n}(F),{\rm Sp}_{2n}(E))$}},
  journal    = {J. Number Theory},
  fjournal   = {Journal of Number Theory},
  volume     = {130},
  year       = {2010},
  number     = {11},
  pages      = {2428--2441},
  issn       = {0022-314X},
  mrclass    = {22E50 (11F70 20G05)},
  mrnumber   = {2678856},
  mrreviewer = {Oksana S. Yakimova},
  doi        = {10.1016/j.jnt.2010.03.013},
  url        = {https://doi.org/10.1016/j.jnt.2010.03.013}
}

@article{Car,
  title   = {On the stability and Gelfand property of symmetric pairs},
  author  = {Carmeli, Shachar},
  journal = {arXiv preprint arXiv:1511.01381},
  year    = {2015}
}

@article{Bernstein1977,
  author  = {Bernstein, I. N. and Zelevinsky, A. V.},
  title   = {Induced representations of reductive {$\mathfrak{p}$}-adic groups. I},
  journal = {Annales scientifiques de l'{\'E}cole Normale Sup{\'e}rieure},
  series  = {4},
  volume  = {10},
  number  = {4},
  pages   = {441--472},
  year    = {1977},
  doi     = {10.24033/asens.1333},
  url     = {https://www.numdam.org/articles/10.24033/asens.1333/}
}

@misc{conradcompact,
  title  = {Compact subgroups of GLn (Qp)},
  author = {Conrad, Keith}
}

@incollection{springer1998linear,
  title     = {Linear algebraic groups},
  author    = {Springer, Tonny Albert},
  booktitle = {Algebraic Geometry IV: Linear Algebraic Groups Invariant Theory},
  pages     = {1--121},
  year      = {1998},
  publisher = {Springer}
}

@article{conrad2020reductive,
  title   = {Reductive groups over fields},
  author  = {Conrad, Brian and Feng, notes by Tony},
  journal = {Notes by Tony Feng, available on AMS open math notes},
  year    = {2020}
}

\end{document}